\title[Noise Sensitivity and Learning Lower Bounds for Hierarchical Functions]{Noise Sensitivity and Learning Lower Bounds for Hierarchical Functions}
\definecolor{lavender}{rgb}{0.5,0,1.0}
\newcommand{\dfn}[1]{\textcolor{blue}{\emph{#1}}}
\newcommand{\SortNoop}[1]{}
\def\E{\mathbb{E}}
\def\P{\mathbb{P}}
\def\R{\mathbb{R}}
\def\Z{\mathbb{Z}}
\def\eps{\varepsilon}
\def\deg{\operatorname{deg}}
\def\0{\mathbf{0}}
\def\Var{\operatorname{Var}}
\def\Unif{\operatorname{Unif}}
\def\Corr{\operatorname{Corr}}
\def\Lin{\operatorname{Lin}}
\def\maj{\operatorname{maj}}
\newcommand{\floor}[1]{\lfloor#1\rfloor}
\newcommand{\mf}[1]{\mathfrak{#1}}
\newcommand{\mc}[1]{\mathcal{#1}}
\newcommand{\norm}[1]{\left\lVert#1\right\rVert}
\newcommand{\abs}[1]{\left\lvert#1\right\rvert}
\newcommand{\set}[1]{\left\{#1\right\}}
\newcommand{\paren}[1]{\left(#1\right)}
\newcommand{\bracket}[1]{\left[#1\right]}
\newcommand{\1}[1]{1_{\left\{#1\right\}}}
\newtheorem{theorem}{Theorem}[section]
\newtheorem{lemma}[theorem]{Lemma}
\newtheorem{corollary}[theorem]{Corollary}
\theoremstyle{definition}
\newtheorem{definition}[theorem]{Definition}
\newtheorem{example}[theorem]{Example}
\crefname{problem}{Problem}{Problems}
\theoremstyle{remark}
\newtheorem{remark}[theorem]{Remark}
\begin{document}

\author[Rupert Li]{Rupert Li}
\address[]{Stanford University, Stanford, CA 94305, USA}
\email{rupertli@stanford.edu}

\author[Elchanan Mossel]{Elchanan Mossel}
\address[]{Massachusetts Institute of Technology, Cambridge, MA 02139, USA}
\email{elmos@mit.edu}

\begin{abstract}
    Recent works explore deep learning's success by examining functions or data with hierarchical structure.
    To study the learning complexity of functions with hierarchical structure, 
    we study the noise stability of functions with tree hierarchical structure on independent inputs.
    We show that if each function in the hierarchy is $\varepsilon$-far from linear,
    the noise stability is exponentially small in the depth of the hierarchy. 

    Our results have immediate applications for agnostic learning.
    In the Boolean setting using the results of
    Dachman-Soled, Feldman, Tan, Wan and Wimmer (2014), our results provide Statistical Query super-polynomial lower bounds for agnostically learning classes that are based on hierarchical functions.
 
    We also derive similar SQ lower bounds based on the indicators of crossing events in critical site percolation. These crossing events are not formally hierarchical as we define but still have some hierarchical features as studied in  mathematical physics.

    Using the results of Abbe, Bengio, Cornacchiam, Kleinberg, Lotfi, Raghu and Zhang (2022), our results imply sample complexity lower bounds for learning hierarchical functions with gradient descent on fully connected neural networks. 
    
    Finally in the Gaussian setting, using the results of Diakonikolas, Kane, Pittas and Zarifis (2021), our results provide super-polynomial lower bounds for agnostic SQ learning.
    
\end{abstract}

% \begin{keywords}
%     Noise sensitivity, hierarchical functions, deep learning lower bounds, Boolean functions, noise stability, multilinear functions, non-separable functions, Efron--Stein decomposition, percolation
% \end{keywords}

\maketitle

\section{Introduction}\label{sec:introduction}

Our goal is to bridge two lines of research in learning theory. The first line proposed that the success of neural networks can be explained in terms 
 of hierarchical structures in the data. 
 The second line of work provides learning lower bounds for noise sensitive functions, or classes of such functions. 
 
Multiple works have argued that the success of deep learning is intimately related to hierarchical properties in data generative processes.
This hypothesis has been independently proposed by multiple researchers, including:
\begin{itemize}
\item
Bruna and Mallat's wavelet scattering networks~\cite{BrunaMallat:13},
\item
Mhaskar, Liao, and Poggio's compositional function models~\cite{MhLiPo:16},
\item
Patel, Nguyen, and Baraniuk's hierarchical rendering models~\cite{PaNgBa:2015}.
\end{itemize}
Mossel~\cite{Mossel:19deep} provided a rigorous attempt to establish computational hardness, though subsequent work \cite{KoehlerMossel:22,HuangMossel:24} only demonstrated low-degree hardness.
In terms of compositions of functions, 
Telgarsky~\cite{Telgarsky:16} presents a notable result demonstrating an iteratively composed simple function that can be precisely expressed as a deep narrow network, yet requires super-polynomial computational complexity for small network depth.

The second research line establishes learning lower bounds in terms of the Fourier expansion of functions or classes of functions. 
This includes in particular SQ lower bounds in terms of the noise-sensitivity in both the Boolean case~\cite{dachman2014approximate} and the Gaussian case~\cite{diakonikolas2021optimality} as well as sample complexity lower bounds for agnostically learning such functions using fully connected neural networks and 
stochastic gradient descent~\cite{ABCKLRZ:22}.

In our main result we show that hierarchical functions, i.e., functions obtained by decompositions of non-linear functions on disjoint independent inputs, are noise sensitive.
This  implies SQ lower bounds for agnostically learning such functions in both the Boolean and the Gaussian case as well as sample lower bounds for learning such functions using neural networks. 

Our work builds upon a long line of work in discrete Fourier analysis.
The fact that recursively composing the majority function on $3$ bits leads to a noise sensitive function is folklore in the area, see e.g.,~\cite{BenorLinial:90,BeKaSc:99}.
Mossel and O'Donnell~\cite{MosselODonnell:02} further study decompositions of monotone functions and construct such decompositions that are essentially ``most" noise sensitive among monotone functions. 
We also note that recursive constructions, i.e., Sipser functions, provide the best results in circuit lower bounds for circuits with AND, OR and NOT gates~\cite{HRRT:17}. 

The main contribution of the current paper is by showing that noise sensitivity of composition of non-linear functions on non-overlapping inputs is generic---the functions to be composed do not have to be identical, to be chosen carefully, or to be balanced or have any other property other than being somewhat uncorrelated with linear functions of their inputs. 
Our results hold for all product measures, both discrete and continuous.

\subsection{Our Results}
\subsubsection{Multilinear functions}
To begin with we consider the case where all functions involved are multilinear.
This includes in particular the case where all functions have binary inputs and outputs which is the classical case where analysis of Boolean functions and noise sensitivity are discussed.
In this case we obtain the following result concerning a single multilinear function.
Precise definitions are provided in \Cref{sec:multilinear}, though we comment that $d(f,\Lin)$ quantifies the non-linearity of $f$.
\begin{lemma}\label{lemma:multilinear}
    Suppose $\set{(X_i,Y_i):i\in[n]}$ are mutually independent, where for some $0\leq\rho\leq1$, we have $\Corr(X_i,Y_i)\leq\rho$ for all $i$.
    Suppose $X_i$ and $Y_i$ have the same first and second moments, i.e., $\E[X_i]=\E[Y_i]$ and $\E[X_i^2]=\E[Y_i^2]<\infty$, for all $i\in[n]$.
    If $f:\R^n\to\R$ is a multilinear function satisfying $d(f,\Lin)\geq\varepsilon>0$, then 
    \begin{equation}\label{eq:lemma_multilinear}
        \Corr(f(X),f(Y))\leq(1-\varepsilon)\rho+\varepsilon\rho^2.
    \end{equation}
\end{lemma}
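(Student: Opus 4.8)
The plan is to reduce everything to a clean orthogonal decomposition and then read off the bound level by level. First I would standardize the coordinates: writing $\mu_i = \E[X_i] = \E[Y_i]$ and $\sigma_i^2 = \Var(X_i) = \Var(Y_i)$ (equal because the first two moments match; I may assume $\sigma_i > 0$, else that coordinate is a.s.\ constant and can be dropped), set $\tilde X_i = (X_i-\mu_i)/\sigma_i$ and $\tilde Y_i = (Y_i-\mu_i)/\sigma_i$. These are mean-zero and unit-variance with $\E[\tilde X_i \tilde Y_i] = \Corr(X_i,Y_i) =: \rho_i \le \rho$, and the pairs $(\tilde X_i,\tilde Y_i)$ remain mutually independent. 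Since $X_i = \mu_i + \sigma_i \tilde X_i$, substituting into the multilinear $f$ produces a single multilinear polynomial $g(u) = \sum_{S\subseteq[n]} \hat g(S)\prod_{i\in S} u_i$ with $f(X) = g(\tilde X)$ and $f(Y) = g(\tilde Y)$, so it suffices to analyze $\Corr(g(\tilde X), g(\tilde Y))$.

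The key observation is that the monomials $\{\prod_{i\in S} u_i\}_S$ are orthonormal under each of the two product measures, because the coordinates are independent, mean-zero, and unit-variance. Hence $\E[g(\tilde X)] = \hat g(\emptyset) = \E[g(\tilde Y)]$ and, crucially, $\Var(g(\tilde X)) = \sum_{S\neq\emptyset}\hat g(S)^2 = \Var(g(\tilde Y)) =: V$, so both legs of the correlation share the same denominator. For the numerator I would factor $\E\big[\prod_{i\in S}\tilde X_i \prod_{j\in T}\tilde Y_j\big]$ over coordinates using independence of the pairs: any index in the symmetric difference $S\triangle T$ contributes a mean-zero factor, so only the diagonal $S=T$ survives, giving $\E[g(\tilde X)g(\tilde Y)] = \sum_S \hat g(S)^2\prod_{i\in S}\rho_i$. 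Therefore
\[
\Corr(f(X),f(Y)) \;=\; \frac{\sum_{S\neq\emptyset}\hat g(S)^2\prod_{i\in S}\rho_i}{\sum_{S\neq\emptyset}\hat g(S)^2},
\]
a weighted average of the products $\prod_{i\in S}\rho_i$ over nonempty $S$, with weights $\hat g(S)^2$.

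Now I would bound the average level by level. Using $0\le \rho_i \le \rho \le 1$ gives $\prod_{i\in S}\rho_i \le \rho^{|S|}$, hence $\le \rho$ when $|S|=1$ and $\le \rho^2$ when $|S|\ge 2$. Splitting the weight $V$ into its level-one part $W_1 = \sum_{|S|=1}\hat g(S)^2$ and its higher part $W_{\ge 2} = \sum_{|S|\ge 2}\hat g(S)^2$ yields
\[
\Corr(f(X),f(Y)) \;\le\; \rho\,\frac{W_1}{V} + \rho^2\,\frac{W_{\ge 2}}{V} \;=\; \rho - \frac{W_{\ge 2}}{V}\,\rho(1-\rho).
\]
By the definition of $d(f,\Lin)$ in \Cref{sec:multilinear} (the fraction of the variance of $f$ carried by its degree-$\ge 2$, i.e.\ non-linear, part), the hypothesis $d(f,\Lin)\ge\varepsilon$ gives $W_{\ge 2}/V \ge \varepsilon$; since $\rho(1-\rho)\ge 0$, the right-hand side is largest when this fraction is smallest, so it is at most $\rho - \varepsilon\rho(1-\rho) = (1-\varepsilon)\rho + \varepsilon\rho^2$, which is \eqref{eq:lemma_multilinear}.

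The algebra here is routine; the real content—and the step I would be most careful with—is the diagonalization in the second paragraph, where the matched first and second moments are exactly what force $\Var(f(X)) = \Var(f(Y))$ and collapse the cross-covariance onto the diagonal. The one genuine subtlety is the sign of the $\rho_i$: the bound $\prod_{i\in S}\rho_i \le \rho^{|S|}$ for even $|S|$ relies on $\rho_i \ge 0$, so I would confirm that the hypothesis is meant as $0 \le \Corr(X_i,Y_i) \le \rho$ (otherwise a perfectly anti-correlated pair could make a high-degree monomial contribute correlation near $1$, defeating the bound). With that reading and the stated definition of $d(f,\Lin)$, the estimate goes through as above.
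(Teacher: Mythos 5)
Your proof is correct and follows essentially the same route as the paper's: normalize to the orthonormal monomials $\chi_S$, observe that matched moments force $\Var(f(X))=\Var(f(Y))$ and that only the diagonal $S=S'$ survives in the cross-covariance, bound $\E[\chi_S(X)\chi_S(Y)]\le\rho^{|S|}$, and split the weights by degree using $d(f,\Lin)\ge\varepsilon$. Your caveat about the sign of $\Corr(X_i,Y_i)$ is legitimate but applies equally to the paper's own proof (and is harmless under the intended reading $|\Corr(X_i,Y_i)|\le\rho$, since then $\prod_{i\in S}\Corr(X_i,Y_i)\le\rho^{|S|}$ still holds), so it does not constitute a gap relative to the paper.
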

\begin{remark}\label{remark:correlation_construction}
    Such a $Y$ can always be generated by setting $Y_i=X_i$ with probability $\rho$ and otherwise letting $Y_i$ be independent and identically distributed as $X_i$, independently for all $i$; note that this is not in general the unique construction.
    Moreover, this construction yields $X$ and $Y$ that have the same marginal distribution, which does not need to be true in general for \Cref{lemma:multilinear} to hold.
\end{remark}
Applying \Cref{lemma:multilinear} inductively yields the following result demonstrating exponential decay in noise stability for a hierarchical multilinear function, precisely defined in \Cref{subsec:hierarchical_multilinear}.
\begin{theorem}\label{theorem:multilinear}
    Suppose $\set{(X_i,Y_i):i\in[n]}$ are mutually independent, where for some $0\leq\rho<1$, we have $\Corr(X_i,Y_i)\leq\rho$ for all $i$.
    Suppose $X_i$ and $Y_i$ have the same first and second moments, i.e., $\E[X_i]=\E[Y_i]$ and $\E[X_i^2]=\E[Y_i^2]<\infty$, for all $i\in[n]$.
    If $f:\R^n\to\R$ is a hierarchical multilinear function of non-linearity $\varepsilon>0$ and depth $d$, then for any $\delta<\varepsilon$,
    \[ \Corr(f(X),f(Y)) \leq \paren{1-\delta}^{d-C_{\varepsilon,\delta}\log\paren{\frac{1}{1-\rho}}} \]
    for some constant $C_{\varepsilon,\delta}>0$ depending only on $\varepsilon$ and $\delta$.
\end{theorem}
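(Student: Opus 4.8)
The plan is to apply \Cref{lemma:multilinear} level by level up the hierarchy, thereby reducing the theorem to the study of a one-dimensional recursion. Write $g(t) = (1-\varepsilon)t + \varepsilon t^2$ for the bound appearing in \eqref{eq:lemma_multilinear}, and define $\rho_0 = \rho$ and $\rho_{k+1} = g(\rho_k)$. I would prove by induction on the height $k$ that, for every node $v$ at height $k$, the sub-function $f_v$ rooted at $v$ satisfies $\Corr(f_v(X), f_v(Y)) \le \rho_k$. The base case $k = 0$ is the leaf hypothesis $\Corr(X_i, Y_i) \le \rho$. For the inductive step, observe that the immediate inputs to $f_v$ are the outputs of the children of $v$; these are mutually independent because distinct children depend on disjoint blocks of the independent leaves, and each child pair has correlation at most $\rho_k$ by the inductive hypothesis. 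The remaining hypothesis of \Cref{lemma:multilinear}, equality of first and second moments, propagates automatically: since each child function $h$ is multilinear, both $\E[h(X_S)]$ and $\E[h(X_S)^2]$ are determined once the first and second moments of the (independent) leaf inputs are fixed, so the equalities $\E[X_i] = \E[Y_i]$ and $\E[X_i^2] = \E[Y_i^2]$ pass upward to the child outputs. Applying \Cref{lemma:multilinear} to $f_v$ (which is $\varepsilon$-far from linear) then gives $\Corr(f_v(X), f_v(Y)) \le g(\rho_k) = \rho_{k+1}$, where I use that $g$ is nondecreasing on $[0,1]$ to convert the inductive upper bound $\rho_k$ into an upper bound on the output correlation. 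At the root, $k = d$, this yields $\Corr(f(X), f(Y)) \le \rho_d$.

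It then remains to show $\rho_d \le (1-\delta)^{d - C_{\varepsilon,\delta}\log(1/(1-\rho))}$. Setting $u_k = 1 - \rho_k$, a direct computation turns the recursion into $u_{k+1} = u_k\paren{1 + \varepsilon(1 - u_k)}$, so $u_k$ is nondecreasing and $\rho = 1$ is an unstable fixed point. I would split the trajectory at the threshold $u = \delta/\varepsilon$. When $u_k \ge \delta/\varepsilon$, equivalently $\rho_k \le 1 - \delta/\varepsilon$, one checks that $g(\rho_k) \le (1-\delta)\rho_k$, so the correlation contracts by a factor $(1-\delta)$ at every subsequent step (and $u_k$ never drops back below the threshold). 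When $u_k < \delta/\varepsilon$, instead $u_{k+1} \ge (1 + \varepsilon - \delta)u_k$, so the distance from $1$ grows geometrically with ratio $1 + \varepsilon - \delta > 1$. Hence the number of steps $k_1$ needed to first reach $u_{k_1} \ge \delta/\varepsilon$ satisfies $k_1 \le C_{\varepsilon,\delta}\log\paren{\frac{1}{1-\rho}}$ for a suitable constant $C_{\varepsilon,\delta}$ (essentially $1/\log(1+\varepsilon-\delta)$, the rounding being absorbed into the constant), since $u_0 = 1-\rho$. If $d \ge k_1$, the subsequent contraction gives $\rho_d \le (1-\delta)^{d - k_1}\rho_{k_1} \le (1-\delta)^{d - C_{\varepsilon,\delta}\log(1/(1-\rho))}$, using $\rho_{k_1} \le 1$, $1 - \delta < 1$, and $k_1 \le C_{\varepsilon,\delta}\log(1/(1-\rho))$; and if $d < k_1$ the claimed exponent is negative, making the right-hand side at least $1 \ge \Corr(f(X),f(Y))$, so the bound holds trivially.

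The moment-propagation step is routine but genuinely needed, since without it \Cref{lemma:multilinear} cannot be reapplied at interior nodes; the main work lies in the recursion analysis. The essential difficulty is that $\rho = 1$ is a fixed point of $g$, so when $\rho$ is close to $1$ the sequence leaves a neighborhood of $1$ only slowly, and it is precisely this escape phase that costs the additive $C_{\varepsilon,\delta}\log(1/(1-\rho))$ in the exponent. The delicate point is therefore to cleanly separate the slow near-$1$ escape from the geometric decay at the single threshold $1 - \delta/\varepsilon$, which is exactly what forces the restriction $\delta < \varepsilon$ and pins down the constant $C_{\varepsilon,\delta}$.
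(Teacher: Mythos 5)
Your proof is correct and follows essentially the same route as the paper's: both reduce to the scalar iteration $g^d(\rho)$ with $g(t)=(1-\varepsilon)t+\varepsilon t^2$ by applying \Cref{lemma:multilinear} level by level (with the same observation that multilinearity propagates the first- and second-moment equalities up the tree), and both then split the recursion at the threshold $\rho=1-\delta/\varepsilon$, paying the additive $C_{\varepsilon,\delta}\log\paren{\frac{1}{1-\rho}}$ for the geometric escape of $1-\rho_k$ and contracting by $1-\delta$ per step thereafter. The paper's $\alpha$ is exactly your $1-\delta/\varepsilon$ and its escape ratio $1+\varepsilon\alpha$ equals your $1+\varepsilon-\delta$, so the two arguments coincide.
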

As discussed in \Cref{remark:multilinear_tightness}, this bound is essentially tight, as it is impossible to have exponential rate smaller than $1-\varepsilon$.
\Cref{remark:multilinear_epsilon1} discusses how this can be improved to doubly exponential decay in depth in the case where each component function has $0$ correlation with linear functions, such as $t$-resilient functions from cryptography, which are further discussed in \Cref{remark:multilinear_resilient}, and the parity bit function as further discussed in \Cref{remark:multilinear_parity}.
\subsubsection{General functions}
The results above leave something to be desired, as neural network activation functions, notably ReLU, are typically not multilinear in their inputs.
% It is also desirable to consider inputs that are real-valued and not just binary.
However, some care is needed in this general case as the following examples show.
\begin{example}\label{example:cos_arccos}
    Consider a hierarchical function with a binary tree structure, where at even levels we have component functions $f(x_1,x_2)=\cos(\pi x_1)$ and at odd levels $g(x_1,x_2)=\frac{\arccos(x_1)}{\pi}$, where each of the component variables takes values in $[0,1]$.
    For example, such a function of depth 2 would be given by $g(f(x_1,x_2),f(x_3,x_4))$.
    We note that $f$ and $g$ are clearly non-linear, but any such function of even depth will output $x_1$, which is as noise stable as possible, and there is no decay in correlation as depth increases.
\end{example}
\begin{example}\label{example:recursive_majority}
    The phenomenon above does not require the functions to have continuous inputs.
    In fact, it suffices for the inputs and outputs of the functions to take on a small number of values, e.g., four.
    For example, we can consider the input distributions to be uniform over $\set{-1,1}^{3^d}$, i.e., $3^d$ i.i.d. variables uniformly distributed on $\{-1,1\}$.
    In our correlated setup for noise sensitivity, we consider two such random vectors $X=(X_1,\dots,X_n)$ and $Y=(Y_1,\dots,Y_n)$ coupled such that $\{(X_i,Y_i)\}_{i=1}^n$ are mutually independent, and $\Corr(X_i,Y_i)=\rho$ for all $i$.

    In our construction, our functions can output at most four values, $\pm10\pm1$, and our overall hierarchical function has a ternary tree structure. 
    For a number $x=10b_1+b_2$ for $b_1,b_2\in\{-1,1\}$, let us write $B_1(x)=b_1$ and $B_2(x)=b_2$.
    We can then take the functions at the lowest layer (depth $d$) to be 
    \[ f(x_1,x_2,x_3)=x_1+10\maj(x_1,x_2,x_3), \]
    where $\maj(x_1,x_2,x_3)=2\cdot\1{x_1+x_2+x_3>0}-1$ is the majority function, i.e., takes value 1 if a majority of inputs take value 1, and takes value $-1$ if a majority of inputs takes value $-1$.
    We let the remaining functions be
    \[ g(x_1,x_2,x_3)=B_2(x_1)+10\maj(B_1(x_1),B_1(x_2),B_1(x_3)), \]
    %and the functions at all remaining levels be
    %\[ h(x_1,x_2,x_3)=B_2(x_1)+10\maj(B_1(x_1),B_1(x_2),B_1(x_3)). \]
    It is easy to see that all functions involved are far from linear, 
    %especially the non-root functions,
    but the hierarchical function on $3^d$ inputs computes  the first input plus $10$ times the recursive majority of all $3^d$ inputs, and is therefore very noise stable as it has constant correlation with $x_1$.
\end{example}

To account for potential non-linear relationships, we strengthen our notions of correlation in both our assumptions and our conclusions.
The precise definitions are provided in \Cref{sec:non_separable}, but loosely speaking, $\Corr(L^2(X),L^2(Y))$ denotes the maximum correlation between any two functions where one is a function of $X$ and the other is a function of $Y$.
The analog of \Cref{lemma:multilinear} in this more general setting is the following lemma.
\begin{lemma}\label{lemma:nonseparable}
Suppose $\set{(X_i,Y_i):i\in[n]}$ are mutually independent, where for some $0\leq\rho\leq1$, we have $\Corr(L^2(X_i),L^2(Y_i))\leq\rho$ for all $i$.
If $f:\R^n\to\R$ satisfies $\E[f(X)^2],\E[f(Y)^2]<\infty$ and
\begin{equation}\label{eq:lemma_nonseparable_condition}
    \Corr(L^2(f(X)),L^2(X_1)+\cdots+L^2(X_n)),\Corr(L^2(f(Y)),L^2(Y_1)+\cdots+L^2(Y_n))\leq\sqrt{1-\varepsilon}
\end{equation}
for some $\varepsilon>0$, then
\[ \Corr(L^2(f(X)),L^2(f(Y))) \leq (1-\varepsilon)\rho+\varepsilon\rho^2. \]
\end{lemma}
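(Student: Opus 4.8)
The plan is to mirror the proof structure that presumably established \Cref{lemma:multilinear}, but carried out at the level of the correlation functional $\Corr(L^2(\cdot),L^2(\cdot))$ rather than with explicit Fourier coefficients. The key conceptual point is that $\Corr(L^2(X),L^2(Y))$ is the \emph{maximal correlation} between $X$ and $Y$ in the sense of Hirschfeld--Gebelein--R\'enyi, and maximal correlation has a well-developed tensorization theory for independent coordinates. So the first step I would take is to invoke (or re-derive) the tensorization property: since $\set{(X_i,Y_i)}$ are mutually independent, the maximal correlation of the product $\Corr(L^2(X),L^2(Y))$ equals $\max_i \Corr(L^2(X_i),L^2(Y_i)) \leq \rho$. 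This replaces, in the general setting, the role that the ``noise operator acts diagonally on the Fourier basis'' plays in the multilinear case.

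Next I would set up the core inequality. Write $g_X = f(X)$ and $g_Y = f(Y)$ as the two $L^2$ functions whose correlation we wish to bound. The decomposition I would use is to split the variance of $g_X$ into the part that is ``captured'' by the additive subspace $L^2(X_1)+\cdots+L^2(X_n)$ and its orthogonal complement. Hypothesis \eqref{eq:lemma_nonseparable_condition} says precisely that $g_X$ has squared-correlation at most $1-\varepsilon$ with the additive subspace, so at least an $\varepsilon$ fraction of its variance lives in the ``higher-order'' (non-additive) part. The plan is then to bound $\Corr(L^2(g_X),L^2(g_Y))$ by splitting both functions along this additive/non-additive decomposition and bounding the cross-correlations. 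On the additive piece, the correlation transfers through coordinatewise with factor at most $\rho$; on the non-additive piece, one gains a second factor of $\rho$ because the non-additive part is built from products of at least two coordinates, each contributing a factor of $\rho$ under the independent coupling. This is exactly what produces the weighted combination $(1-\varepsilon)\rho + \varepsilon\rho^2$: the additive fraction (at most $1-\varepsilon$) is contracted by $\rho$, while the remaining fraction (at least $\varepsilon$) is contracted by $\rho^2$.

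The main obstacle I anticipate is making the ``non-additive part contracts by $\rho^2$'' step rigorous without an explicit orthogonal basis. In the multilinear case one has the Fourier/Efron--Stein decomposition and the noise operator acts as $\rho^{\abs{S}}$ on the degree-$\abs{S}$ piece, so the $\geq 2$ degree terms automatically contract by at least $\rho^2$. In the general $L^2$ setting I would want the analogous Efron--Stein (Hoeffding) orthogonal decomposition of $L^2$ of a product space into pure-order-$k$ components, and the statement that the maximal-correlation operator (tensor power) contracts the order-$k$ component by $\rho^k$. Establishing that contraction for the order-$\geq 2$ subspace—equivalently, that the maximal correlation between the non-additive parts of $g_X$ and $g_Y$ is at most $\rho^2$—is the crux, and I expect it follows from tensorization applied after projecting onto the orthogonal complement of the constants-plus-additive subspace, using that this complement is itself a tensor-compatible (product-invariant) subspace. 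Once that contraction is in hand, the final inequality is a short Cauchy--Schwarz / convexity argument combining the two pieces with weights $1-\varepsilon$ and $\varepsilon$.
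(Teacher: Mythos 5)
Your plan is essentially the paper's proof: the paper takes arbitrary normalized $g,h$, decomposes $G=g\circ f$ and $H=h\circ f$ via the Efron--Stein decomposition, uses that the tensorized Markov operator contracts the order-$\abs{S}$ component by $\rho^{\abs{S}}$ (the rigorous form of your ``non-additive part contracts by $\rho^2$'' crux), and finishes with Cauchy--Schwarz plus the non-separability hypothesis applied to both $G$ and $H$ to obtain $(1-\varepsilon)\rho+\varepsilon\rho^2$. The only cosmetic difference is that your opening tensorization step, $\Corr(L^2(X),L^2(Y))=\max_i\Corr(L^2(X_i),L^2(Y_i))\leq\rho$, is never actually used---only the finer per-order contraction matters.
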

In \Cref{subsec:hierarchical_nonseparable}, we justify why \eqref{eq:lemma_nonseparable_condition} is the natural analog of the condition $d(f,\Lin)\geq\varepsilon$ in \Cref{lemma:multilinear}.
\begin{remark}\label{remark:correlation_construction_L2}
It is straightforward to check that the construction in \Cref{remark:correlation_construction} still satisfies the stronger condition $\Corr(L^2(X_i),L^2(Y_i))=\rho$: letting $Z_i$ be an independent copy of $X_i$ such that $Y_i=X_i$ with probability $\rho$ and $Y_i=Z_i$ otherwise, we have, for $f\in L^2(X_i)$ and $g\in L^2(Y_i)$ normalized so that $\E[f]=\E[g]=0$ and $\E[f^2]=\E[g^2]=1$, by the Cauchy--Schwarz inequality
\[ \Corr(f(X_i),g(Y_i)) = \rho\E[f(X_i)g(X_i)]+(1-\rho)\E[f(X_i)g(Z_i)] \leq \rho\cdot1 + (1-\rho)\cdot0 = \rho, \]
where equality is achieved when $f=g$ as functions $\R\to\R$.
\end{remark}
Using \Cref{lemma:nonseparable}, we obtain the following general version of \Cref{theorem:multilinear}.
\begin{theorem}\label{theorem:nonseparable}
    Suppose $\set{(X_i,Y_i):i\in[n]}$ are mutually independent, where for some $0\leq\rho<1$, we have $\Corr(L^2(X_i),L^2(Y_i))\leq\rho$ for all $i$.
    If $f:\R^n\to\R$ is a hierarchical function of depth $d$ and non-separability $\varepsilon>0$ in both $L^2(X)$ and $L^2(Y)$, then for any $\delta<\varepsilon$,
    \[ \Corr(L^2(f(X)),L^2(f(Y)))\leq (1-\delta)^{d-C_{\varepsilon,\delta}\log\paren{\frac{1}{1-\rho}}} \]
    for some constant $C_{\varepsilon,\delta}>0$ depending only on $\varepsilon$ and $\delta$.
\end{theorem}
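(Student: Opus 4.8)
The plan is to mirror the inductive strategy that yields \Cref{theorem:multilinear} from \Cref{lemma:multilinear}, now using \Cref{lemma:nonseparable} as the inductive step, and then to analyze the resulting scalar recursion carefully enough to extract the $\log(1/(1-\rho))$ correction in the exponent. Write the hierarchical function as a composition tree of depth $d$ whose root applies a function $g$ to the outputs of its children $h_1,\dots,h_k$, each $h_j$ a hierarchical function of depth $d-1$ acting on a disjoint block $X^{(j)}$ of the coordinates. The first step is to record that, because $\set{(X_i,Y_i):i\in[n]}$ are mutually independent and the blocks are disjoint, the child-output pairs $(U_j,V_j):=(h_j(X^{(j)}),h_j(Y^{(j)}))$ are again mutually independent; moreover the definition of non-separability $\varepsilon$ guarantees that $g$ satisfies condition \eqref{eq:lemma_nonseparable_condition} with respect to the inputs $U$ and $V$, in both $L^2(U)$ and $L^2(V)$.

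Define $\rho_d$ to be the worst-case value of $\Corr(L^2(f(X)),L^2(f(Y)))$ over depth-$d$ hierarchical functions of non-separability $\varepsilon$, with base case $\rho_0:=\rho$ (a single coordinate, for which $\Corr(L^2(X_i),L^2(Y_i))\le\rho$). By the inductive hypothesis each child satisfies $\Corr(L^2(U_j),L^2(V_j))\le\rho_{d-1}$, so applying \Cref{lemma:nonseparable} to $g$ with these inputs and invoking monotonicity of $r\mapsto\phi(r):=(1-\varepsilon)r+\varepsilon r^2$ on $[0,1]$ gives the recursion
\[ \rho_d \le (1-\varepsilon)\rho_{d-1}+\varepsilon\rho_{d-1}^2 = \phi(\rho_{d-1}). \]
Thus it remains to bound the $d$-fold iterate $\phi^{(d)}(\rho)$.

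For the recursion analysis I would split into two phases governed by the threshold $r^{*}:=1-\delta/\varepsilon\in(0,1)$, well-defined since $0<\delta<\varepsilon$. For $r\le r^{*}$ one has $(1-\varepsilon)+\varepsilon r\le 1-\delta$, hence $\phi(r)\le(1-\delta)r$: once the iterate drops below $r^{*}$ it contracts by a factor $(1-\delta)$ at every subsequent step. To control how long it takes to reach this regime I would track the gap $s:=1-r$; the identity $1-\phi(r)=(1-r)(1+\varepsilon r)$ shows that while $r>r^{*}$ (equivalently $s<\delta/\varepsilon$) the gap grows multiplicatively, $s'\ge(1+\varepsilon-\delta)s$. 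Starting from $s_0=1-\rho$, the gap exceeds $\delta/\varepsilon$ after at most
\[ k_1 = \frac{\log\paren{\frac{\delta}{\varepsilon(1-\rho)}}}{\log(1+\varepsilon-\delta)} \le C_{\varepsilon,\delta}\log\paren{\frac{1}{1-\rho}} \]
steps, with $C_{\varepsilon,\delta}=1/\log(1+\varepsilon-\delta)>0$ (the negative term $\log(\delta/\varepsilon)$ is dropped). Combining the phases, after $d$ iterations $\rho_d\le(1-\delta)^{d-k_1}\le(1-\delta)^{d-C_{\varepsilon,\delta}\log(1/(1-\rho))}$, which is the claimed bound; when $k_1=0$ the stronger estimate $\rho_d\le(1-\delta)^d$ already implies the claim.

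I expect the main obstacle to lie not in the scalar recursion, which is elementary, but in the bookkeeping of the inductive step: one must verify that the non-separability hypothesis is genuinely inherited at every internal node, that the child outputs satisfy both the mutual independence and the correlation bound $\Corr(L^2(U_j),L^2(V_j))\le\rho_{d-1}$ needed to invoke \Cref{lemma:nonseparable} uniformly across the tree, and that the finiteness conditions $\E[g(U)^2],\E[g(V)^2]<\infty$ propagate upward so that each application of the lemma is legitimate. Taking the worst-case $\rho_{d-1}$ over all depth-$(d-1)$ subtrees is what makes the single scalar recursion valid, and this requires the monotonicity of $\phi$ noted above.
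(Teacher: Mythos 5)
Your proposal is correct and follows essentially the same route as the paper: inductively apply \Cref{lemma:nonseparable} down the tree to reduce to the iterated scalar map $r\mapsto(1-\varepsilon)r+\varepsilon r^2$, then analyze that iteration in two phases --- a burn-in of length $O_{\varepsilon,\delta}(\log\frac{1}{1-\rho})$ during which the gap $1-r$ grows geometrically, followed by contraction at rate $1-\delta$ once below a threshold. Your version is in fact slightly more explicit than the paper's (which takes an unspecified $\alpha_{\varepsilon,\delta}$ where you take $r^*=1-\delta/\varepsilon$ and give $C_{\varepsilon,\delta}$ in closed form), and your closing caveats about propagating independence, non-separability, and square-integrability through the tree are exactly the points the paper handles by asserting the argument mirrors that of \Cref{theorem:multilinear}.
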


\subsection{Applications}
\Cref{theorem:multilinear} immediately implies a low-degree bound on a hierarchical multilinear function---in particular we get that the correlation $M$ of such a function $f$ with a degree-$D$ polynomial is bounded by
\begin{equation}\label{eq:low_degree_multilinear}
    M \leq \rho^{-D/2}(1-\delta)^{d/2-C_{\varepsilon,\delta}\log\paren{\frac{1}{1-\rho}}},
\end{equation}
for any $\rho\in(0,1)$; see \Cref{lemma:low_degree_multilinear}.
Taking $\rho=1-e^{-cd}$ for some small $c\in\paren{0,\frac{1}{2C_{\varepsilon,\delta}}}$, we get a degree bound that is exponential in the circuit depth, i.e., in order to get $M=\Omega(1)$ correlation, one requires $D=e^{\Omega(d)}$.

In the Boolean setting, which is contained in our framework of multilinear functions, using the results of Dachman-Soled, Feldman, Tan, Wan and Wimmer~\cite{dachman2014approximate}, the bound \eqref{eq:low_degree_multilinear} implies super-polynomial lower bounds for Statistical Query (SQ) agnostic learning of concept classes based on hierarchical multilinear functions.
We refer readers to \cite{dachman2014approximate} for background on agnostic learning in the SQ model.
\begin{theorem}\label{theorem:sq_learning_Boolean}
    For any constant $\varepsilon>0$, for any hierarchical multilinear function $f:\set{-1,1}^{n^{1/3}}\to[-1,1]$ of depth $d=\Theta(\log n)$ and non-linearity $\varepsilon$, let $\mc C$ be the concept class of functions $g:\set{-1,1}^n\to[-1,1]$ for which there exist distinct $i_1,\dots,i_{n^{1/3}}\in[n]$ such that $g(x)=f(x_{i_1},\dots,x_{i_{n^{1/3}}})$ for all $x$.
    Then any SQ algorithm for agnostically learning $\mc C$ with excess error of at most $\frac{1}{2}-o(1)$ has complexity at least $n^{\Omega\paren{\frac{\log n}{\log\log(n)}}}$.
\end{theorem}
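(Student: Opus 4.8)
The plan is to feed the low-degree correlation bound of \Cref{lemma:low_degree_multilinear} into the agnostic SQ lower bound framework of \cite{dachman2014approximate}. In that framework $f$ serves as a ``hard'' base function whose defining feature is that it is essentially uncorrelated with all low-degree polynomials, and $\mc C$ is the family of all plantings of $f$ onto an ordered $n^{1/3}$-subset of the $n$ coordinates. Our task is to certify, with explicit parameters, the low-degree uncorrelatedness that their theorem takes as input; their result then produces the SQ lower bound essentially as a black box.

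First I would make the low-degree bound quantitative. Starting from \eqref{eq:low_degree_multilinear} and optimizing the free proof parameter $\rho\in(0,1)$, one obtains a bound of the form $M\leq D^{O(1)}(1-\delta)^{\Omega(d)}$ for the correlation $M$ of $f$ with an arbitrary degree-$D$ polynomial, where the implied constants depend only on $\varepsilon$ and on a chosen $\delta<\varepsilon$. Since the hierarchical function has $n^{1/3}$ leaves and depth $d=\Theta(\log n)$, the factor $(1-\delta)^{\Omega(d)}$ equals $n^{-\Omega(1)}$; hence $M=n^{-\Omega(1)}=o(1)$ uniformly over all degrees $D$ up to $n^{\Omega(1)}$. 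In particular $f$ is $o(1)$-uncorrelated with every polynomial of the degree demanded by \cite{dachman2014approximate}, and I would check that this degree stays below the leaf count $n^{1/3}$ so that \eqref{eq:low_degree_multilinear} is nonvacuous.

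Next I would cast $\mc C$ in their language and invoke their theorem. The hard instance is the uniform distribution on $\{-1,1\}^n$ labeled by a uniformly random $g\in\mc C$. Each $g$ depends on only an $n^{1/3}$-sized random subset of coordinates, so a fixed bounded query can align with the (entirely high-degree) Fourier mass of $g$ only by correctly guessing a large part of the hidden coordinate set; combined with the bound on $M$, this forces the expected squared correlation of any single query with a random target to be negligible. This is exactly the quantity \cite{dachman2014approximate} control, and their agnostic SQ lower bound then yields that achieving excess error at most $\frac{1}{2}-o(1)$ requires complexity at least $n^{\Omega(\log n/\log\log n)}$. I expect the $\log n/\log\log n$ exponent to emerge from their balancing of query count and tolerance against the number $\binom{n}{n^{1/3}}$ of embeddings, rather than from the strength of our correlation bound, which is why the stated exponent does not improve even though $M$ is polynomially (not merely $o(1)$) small.

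The main obstacle is the parameter matching in this last step: one must confirm that our value $M=n^{-\Omega(1)}$ meets the precise resilience and degree hypotheses under which \cite{dachman2014approximate} state their bound, and that their excess-error guarantee degrades gracefully to $\frac{1}{2}-o(1)$ as the correlation tends to $0$. The single genuinely technical ingredient is the Fourier computation of the average query--target correlation over random embeddings, showing that it inherits the $n^{-\Omega(1)}$ smallness from the high-degree concentration of $f$; this is routine once the bound on $M$ is in hand.
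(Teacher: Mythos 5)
Your overall route --- feeding the low-degree correlation bound \eqref{eq:low_degree_multilinear} into the agnostic SQ machinery of \cite{dachman2014approximate} --- is the same as the paper's, and your quantitative handling of \eqref{eq:low_degree_multilinear} (optimizing $\rho$ to get $M\leq D^{O(1)}(1-\delta)^{\Omega(d)}=n^{-\Omega(1)}$) is fine, in fact slightly stronger than what the paper uses (it fixes $\rho$ constant). The gap is in how you interface with \cite{dachman2014approximate}. The quantity you propose to compute --- the expected correlation of a fixed bounded query with a uniformly random embedded copy of $f$ --- is the SQ-dimension-style statistic that drives lower bounds for \emph{weak or PAC} learning; it is not the hypothesis of the agnostic lower bound. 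Their Theorem 1.1 takes as input \emph{approximate $D$-resilience}: $f$ must be close in $\ell_1$ to a bounded function whose Fourier mass on levels $1,\dots,D$ vanishes exactly. Small correlation with every degree-$D$ polynomial does not hand you this directly; the bridge is their Theorem 3.2, an approximation-theoretic statement converting a bound of the form $xe^{D}\sqrt{\sum_{1\le|S|\le D}\widehat f(S)^2}=\tau=o(1)$ into $O(\tau+\exp(-Kx^{2/D})n^{2D+2})$-approximate $D$-resilience. That conversion is the genuinely technical step which your proposal replaces with a ``routine'' Fourier computation, and it is lossy.

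That loss is also why your diagnosis of the exponent is off. The error term $\exp(-Kx^{2/D})n^{2D+2}$ is $o(1)$ only when $x^{2/D}\gg D\log n$ with $x=n^{\Theta(1)}$, which forces $D=O(\log n/\log\log n)$; the correlation bound alone would have tolerated $D=\Theta(\log n)$ (or, with your optimized $\rho$, even $D=n^{\Omega(1)}$). So the $\log n/\log\log n$ in the final complexity $n^{\Omega(D)}$ comes from the resilience conversion, not from balancing query count against the number $\binom{n}{n^{1/3}}$ of embeddings. To repair the argument, replace the ``average query--target correlation over random embeddings'' step with an explicit invocation of \cite[Theorem 3.2]{dachman2014approximate} with $D=\frac{c_1\log n}{\log\log n}$ and $x=n^{c_2}$, verify the resulting resilience parameter is $o(1)$, and then apply \cite[Theorem 1.1]{dachman2014approximate}; that is exactly what the paper does.
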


It is natural to expect that similar results hold for other types of hierarchical functions, not necessarily as defined thus far.
A classical example of such functions is crossing events in critical percolation; we refer readers to \cite{Grimmett} for background on percolation.
Renormalization group arguments in statistical physics (see \cite{bauerschmidt2019renormalization} for an overview of renormalization groups and \cite{stauffer1992introduction} for a more focused treatment on percolation) have argued that such events are self-similar in the sense that the event views at different scales all look the same.
Thus, like the example of recursive majority of $3$, crossing events can be summarized (in some sense) over squares to obtain again a picture of critical percolation.
While this picture has not been proven in a rigorous way, the breakthrough results in statistical physics by Schramm, Werner, Smirnov and others (see, e.g., \cite{schramm2000sle,smirnov2001critical,werner2001critical}) allowed mathematicians to prove related detailed estimates about these functions and their Fourier spectra, which suffices for our noise sensitivity estimates.
In particular, applying a similar proof strategy as that of \cref{theorem:sq_learning_Boolean} to previously known results by Garban, Pete, and Schramm~\cite{garban2010fourier} on the noise sensitivity of critical site percolation on the triangular grid, we show percolation crossing events also create a concept class of functions that imply similar super-polynomial SQ lower bounds for agnostic learning.
\begin{theorem}\label{theorem:sq_learning_percolation}
    Let $f$ be the $\pm1$ indicator function of the left-to-right crossing event of the square $[0,n]^2$ in critical site percolation on the triangular grid.
    Let $N=\Theta(n^2)$ be the number of sites in this square, so that $f$ is a function $\set{-1,1}^N\to\set{-1,1}$.
    Let $\mc C$ be the concept class of functions $g:\set{-1,1}^{N^3}\to\set{-1,1}$ for which there exist distinct $i_1,\dots,i_N\in\bracket{N^3}$ such that $g(x)=f(x_{i_1},\dots,x_{i_N})$ for all $x$.
    Then any SQ algorithm for agnostically learning $\mc C$ with excess error of at most $\frac{1}{2}-o(1)$ has complexity at least $n^{\Omega\paren{\frac{\log n}{\log\log(n)}}}$.
\end{theorem}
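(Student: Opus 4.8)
The plan is to prove \Cref{theorem:sq_learning_percolation} by mirroring the argument behind \Cref{theorem:sq_learning_Boolean}, which has a two-part structure: a generic reduction from Dachman-Soled, Feldman, Tan, Wan and Wimmer~\cite{dachman2014approximate} that converts a bound on the correlation of a base function with low-degree polynomials into an SQ lower bound for the randomly-embedded concept class, and then the supply of that correlation bound for the base function at hand. The only substantive change is that the crossing indicator $f$ is not hierarchical in our sense, so none of \Cref{theorem:multilinear,theorem:nonseparable} apply; in place of the low-degree estimate \eqref{eq:low_degree_multilinear} I would feed in the quantitative noise-sensitivity estimates of Garban, Pete and Schramm~\cite{garban2010fourier}.

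First I would set up the embedding bookkeeping, which is identical to the Boolean case and exhibits the same cube structure (there an $n^{1/3}$-bit function is placed in $n=(n^{1/3})^3$ coordinates; here an $N$-bit function is placed in $N^3$ coordinates). The class $\mc C$ consists of the $\approx\binom{N^3}{N}$ placements of $f$, and for two placements with coordinate sets $S_1,S_2$ only Fourier characters supported inside the overlap $A=S_1\cap S_2$ contribute to $\E[g_1 g_2]$. Averaging over a uniformly random second placement, the resulting correlation is governed by $\sum_{U}\hat f(U)^2(|A|/N)^{|U|}$, that is, by the noise stability $\mathbb{S}_\rho(f)$ at effective parameter $\rho=|A|/N$; since two random $N$-subsets of $[N^3]$ typically meet in $O(1)$ coordinates, the binding quantity is the low-degree Fourier weight $W^{\le D}(f)=\sum_{0<|S|\le D}\hat f(S)^2$ for the moderate degrees $D$ that \cite{dachman2014approximate} tracks.

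Next I would extract the bound on $W^{\le D}(f)$ from \cite{garban2010fourier}. Their analysis of the spectral sample of the left-to-right crossing of an $n\times n$ box shows that the Fourier spectrum concentrates at frequencies of order $n^{3/4}$ (the exponent coming from the four-arm exponent $5/4$ of critical site percolation on the triangular lattice), together with a lower-tail estimate showing that the spectral mass below any scale $k\ll n^{3/4}$ is polynomially small. In particular, for the polylogarithmic degrees $D$ relevant to \cite{dachman2014approximate}, the weight $W^{\le D}(f)$ is super-polynomially small in $n$, which is exactly the hypothesis that their reduction requires. Optimizing the degree parameter to $D=\Theta(\log n/\log\log n)$---the balance point dictated by the $N^3$-coordinate embedding, exactly as in \Cref{theorem:sq_learning_Boolean}---then yields SQ complexity $n^{\Omega(\log n/\log\log n)}$; here I use $\log N=\Theta(\log n)$, valid since $N=\Theta(n^2)$.

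The hard part will be the second step. Because the crossing events are genuinely not hierarchical, I cannot reuse any of our own theorems and must instead translate the estimates of \cite{garban2010fourier}---which are naturally phrased for the spectral sample at the intrinsic percolation scale $n^{3/4}$ and for particular noise models---into the precise form demanded by \cite{dachman2014approximate}, namely a quantitative bound on the correlation with degree-$\le D$ polynomials that holds uniformly over the random embedding. Verifying that the scales line up, that the lower-tail spectral estimate is strong enough at the operative degree, and that the parameters reproduce the same $n^{\Omega(\log n/\log\log n)}$ exponent as in the hierarchical case is where the real work lies; the combinatorial overlap bookkeeping and the final invocation of \cite{dachman2014approximate} are then routine, following \Cref{theorem:sq_learning_Boolean} verbatim.
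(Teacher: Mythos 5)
Your overall architecture is the same as the paper's: feed a bound on the low-degree Fourier weight of the crossing indicator into the reduction of \cite{dachman2014approximate} (their Theorem~3.2, giving approximate $D$-resilience, followed by their Theorem~1.1), with $D=\Theta(\log n/\log\log n)$ and the same $N\mapsto N^3$ embedding as in \Cref{theorem:sq_learning_Boolean}. However, there is one concrete error in the step you yourself identify as the crux. You claim that for polylogarithmic $D$ the weight $W^{\le D}(f)=\sum_{1\le|S|\le D}\widehat f(S)^2$ is \emph{super-polynomially} small in $n$. It is not. The Garban--Pete--Schramm estimate for the left-to-right crossing on the triangular lattice is
\[ \sum_{1\leq\abs{S}\leq D}\widehat f(S)^2 = n^{-\frac{1}{2}+o(1)}D^{\frac{2}{3}+o(1)}, \]
so for $D$ polylogarithmic the low-frequency weight is only $n^{-1/2+o(1)}$ --- polynomially small, with a fixed exponent $1/2$. (Your own preceding sentence, describing the spectral lower tail as ``polynomially small,'' is the correct one; the super-polynomial claim that follows contradicts it.) This is not merely cosmetic: the fixed exponent $1/2$ is what dictates the admissible parameters in the reduction. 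One must take the tolerance parameter $x=n^c$ with $c<\frac14$ precisely so that $xe^D\sqrt{W^{\le D}(f)}=n^{c}\cdot n^{o(1)}\cdot n^{-\frac14+o(1)}=o(1)$; a super-polynomial bound would have allowed any constant $c$, and planning around it risks choosing parameters for which the $\tau$ term in \cite[Theorem~3.2]{dachman2014approximate} fails to vanish.

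Once the correct polynomial bound is substituted, your plan closes exactly as the paper's proof does: with $D=\frac{c\log n}{\log\log n}$ and $x=n^c$, $c\in(0,\frac14)$, one gets $\tau=o(1)$, the secondary term $\exp(-Kx^{2/D})N^{3(2D+2)}$ is $o(1)$ since $-K(\log n)^2+\Theta(\log^2 n/\log\log n)=-\omega(1)$, and \cite[Theorems~3.2 and~1.1]{dachman2014approximate} give the stated $n^{\Omega(\log n/\log\log n)}$ SQ lower bound. Your overlap-bookkeeping discussion of random placements is a reasonable account of the intuition behind the reduction, but it is not needed: the paper invokes the resilience-based theorems of \cite{dachman2014approximate} as black boxes, and I would recommend you do the same rather than re-derive the pairwise-correlation analysis.
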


Analogously, \Cref{theorem:nonseparable} immediately implies a low-degree bound on hierarchical functions, similar to \eqref{eq:low_degree_multilinear}.
The correlation $M$ of such a function $f$ with a function of Efron--Stein degree at most $D$ (see \Cref{subsec:Efron_Stein} for precise definitions), i.e., a sum of functions that each depend on at most $D$ of the $n$ variables, is bounded by
\begin{equation}\label{eq:low_degree_general}
    M \leq \rho^{-D/2}(1-\delta)^{d/2-C_{\varepsilon,\delta}\log\paren{\frac{1}{1-\rho}}};
\end{equation}
see \Cref{lemma:low_degree_general}.
As before, this implies a degree bound exponential in depth.

When our inputs are Gaussian, using the results of Diakonikolas, Kane, Pittas and Zarifis~\cite{diakonikolas2021optimality}, \Cref{theorem:nonseparable} implies super-exponential lower bounds for agnostic learning in the SQ model for concept classes based on hierarchical functions.
\begin{theorem}\label{theorem:sq_learning_Gaussian}
    Fix an arbitrary constant $\varepsilon>0$.
    Let positive integers $n$ and $m$ satisfy $m\leq n^\alpha$ for any constant $\alpha<\frac{1}{2}$.
    For any hierarchical multilinear function $f:\R^m\to\set{\pm1}$ of depth $d=\Theta(\log n)$ and non-separability $\varepsilon$ in $L^2(X)$, where $X\sim\mc N(0,I_m)$ is a $m$-dimensional spherical Gaussian of unit variance, let $\mc C$ be the concept class consisting of the functions of the form $F(x)=f(Px)$, where $P\in\R^{m\times n}$ is any matrix satisfying $PP^T=I_m$.
    Then there exists constant $c=c_\varepsilon>0$ such that any SQ learning algorithm that agnostically learns $\mc C$ over $\mc N(0,I_n)$ to $L^2$-error $\operatorname{OPT}+\frac{1}{8}-o(1)$ either requires queries with tolerance at most $n^{-\Omega(n^c)}$ or makes at least $2^{n^{\Omega(1)}}$ queries.
\end{theorem}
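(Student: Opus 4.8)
The plan is to follow the template of the proof of \Cref{theorem:sq_learning_Boolean}, replacing the Boolean SQ machinery of \cite{dachman2014approximate} with the Gaussian agnostic-learning lower bound of \cite{diakonikolas2021optimality}. The only ingredient their framework requires is that the base function $f$ is essentially orthogonal to all low-degree Hermite polynomials under $\mc N(0,I_m)$, and this is exactly what our hierarchical low-degree estimate \eqref{eq:low_degree_general} supplies. First I would record that estimate: taking the symmetric coupling of \Cref{remark:correlation_construction_L2} so that the correlated copy $Y$ has the same marginal as $X$ (which reduces the two-sided non-separability hypothesis of \Cref{theorem:nonseparable} to the stated one-sided condition in $L^2(X)$), \Cref{lemma:low_degree_general} bounds the correlation of $f$ with any function of Efron--Stein degree at most $D$ by $\rho^{-D/2}(1-\delta)^{d/2-C_{\varepsilon,\delta}\log(1/(1-\rho))}$ for every $\rho\in(0,1)$ and every $\delta<\varepsilon$. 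Because $f$ is multilinear and the coordinates of $X$ are independent Gaussians, its Hermite expansion uses only the degree-$\le 1$ Hermite polynomial in each coordinate; hence Efron--Stein degree coincides with Hermite degree, and the above is precisely a bound on the correlation of $f$ with degree-$\le D$ Hermite polynomials, which is the form \cite{diakonikolas2021optimality} ingests.

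Next I would choose the parameters. Fixing some $\delta<\varepsilon$ and taking $\rho=1-e^{-cd}$ with a sufficiently small constant $c=c_\varepsilon\in(0,1/(2C_{\varepsilon,\delta}))$, the exponent $d/2-C_{\varepsilon,\delta}\log(1/(1-\rho))=(1/2-C_{\varepsilon,\delta}c)d$ is a positive constant times $d$, while the prefactor $\rho^{-D/2}$ stays bounded as long as $D\le e^{cd}$. Thus, setting $D=n^{c}$ (admissible since $d=\Theta(\log n)$ makes $e^{cd}=n^{\Theta(c)}$), the correlation of $f$ with every degree-$\le D$ Hermite polynomial is at most $(1-\delta)^{\Theta(d)}=n^{-\Theta(1)}=o(1)$, so it falls below any fixed constant for large $n$. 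Equivalently, the degree-$D$ polynomial-regression error of $f$ under $\mc N(0,I_m)$ is $1-o(1)$: no degree-$D$ polynomial correlates with $f$ by more than $o(1)$, and crucially $D=n^{\Omega(1)}$.

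Finally I would invoke the main SQ lower bound of \cite{diakonikolas2021optimality}. Feeding in a function $f$ on $\R^m$ whose correlation with all Hermite polynomials of degree at most $D$ is $o(1)$, together with the embedding family $\set{x\mapsto f(Px):PP^T=I_m}$, their theorem concludes that any SQ algorithm agnostically learning this class over $\mc N(0,I_n)$ to $L^2$-error better than the degree-$D$ regression error by a constant margin must use tolerance $n^{-\Omega(D)}$ or make $n^{\Omega(D)}$ queries. Substituting $D=n^{c}$ turns $n^{-\Omega(D)}$ into $n^{-\Omega(n^{c})}$ and $n^{\Omega(D)}$ into $2^{n^{\Omega(1)}}$, and the $o(1)$ low-degree correlation supports the stated $\frac18-o(1)$ error margin. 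The hypothesis $m\le n^{\alpha}$ with $\alpha<\frac12$ is exactly the regime in which \cite{diakonikolas2021optimality} guarantees both that a generic pair of embeddings has small overlap $\|PP'^{T}\|_{\mathrm{op}}$ and that enough nearly-orthogonal embeddings exist to force $2^{n^{\Omega(1)}}$ queries.

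I expect the main obstacle to be bookkeeping at the interface with \cite{diakonikolas2021optimality}: one must confirm that our low-degree guarantee is stated in precisely the Hermite-concentration form their theorem consumes rather than the Efron--Stein form, so the multilinear identification of the two notions of degree must be made carefully; that the hidden object here is an $m$-dimensional subspace rather than a single direction, so their subspace version is the one to apply; and that the constant-margin gap $\frac18-o(1)$ is genuinely licensed by the $o(1)$ low-degree correlation we obtain. Checking that the overlap and counting estimates underlying their lower bound hold uniformly throughout the regime $m\le n^{\alpha}$ is the second place where care is required.
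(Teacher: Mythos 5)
Your high-level plan (swap the Boolean machinery of \cite{dachman2014approximate} for the Gaussian machinery of \cite{diakonikolas2021optimality}) is right, and your parameter choices are in the same spirit as the paper's, but there is a genuine gap at exactly the interface you flagged as ``bookkeeping'': the reduction of \cite{diakonikolas2021optimality} does not ingest a low-degree \emph{correlation} (Hermite-concentration) bound. What \cite[Theorem 1.7]{diakonikolas2021optimality} requires is an $L^1$ polynomial inapproximability statement, namely $\E[\abs{f(X)-p(X)}]\geq\frac{1}{8}-o(1)$ for every polynomial $p$ of degree less than $D$. Knowing that $\Corr(f(X),p(X))=o(1)$ for all such $p$ only controls the $L^2$ regression error; it does not by itself lower-bound $\E[\abs{f-p}]$, since a polynomial can be a good $L^1$ approximator while having negligible correlation with $f$ (for instance by being enormous on a set of small measure, which inflates $\norm{f-p}_2$ but not $\norm{f-p}_1$). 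Your sentence ``the degree-$D$ polynomial-regression error of $f$ is $1-o(1)$'' silently conflates these two notions, and the $\frac{1}{8}-o(1)$ margin in the theorem is precisely where the conflation bites.

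The paper closes this gap with \cite[Theorem 1.5]{diakonikolas2021optimality}, which converts a \emph{noise sensitivity} bound into $L^1$ inapproximability: $\E[\abs{f(X)-p(X)}]\geq\frac{\P(f(X)\neq f(Y))}{4}-O(D\sqrt{1-\rho})$ for a $\rho$-correlated Gaussian pair $(X,Y)$. Accordingly, the paper does not route through \Cref{lemma:low_degree_general} at all; it applies \Cref{theorem:nonseparable} directly to bound $\Corr(L^2(f(X)),L^2(f(Y)))$, hence $\E[f(X)f(Y)]$, hence $\P(f(X)\neq f(Y))\geq\frac{1}{2}-o(1)$, and it chooses $1-\rho=D^{-3}$ so that the error term $O(D\sqrt{1-\rho})$ is also $o(1)$. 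That last constraint---$1-\rho$ must be polynomially small in $D$ to kill the $O(D\sqrt{1-\rho})$ term---is invisible in your write-up precisely because you never invoke Theorem 1.5. To repair the argument, keep the noise-stability bound itself as the input to \cite{diakonikolas2021optimality} rather than its low-degree corollary.
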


Our final application is for learning such functions using stochastic gradient descent on fully connected deep neural networks. 
The results of~\cite{ABCKLRZ:22} which build on~\cite{AbbeSandon:20} imply that for functions as above, in their setting of fully connected neural networks and their initialization, the complexity of learning the function using their version of gradient descent is proportional exponential in the depth of the circuit.
In the special case where all functions are resilient, i.e., have $0$ correlation with linear function, the lower bound becomes doubly exponential in the depth, i.e., exponential in the input size.
More formally, \cite[Theorem 1]{ABCKLRZ:22} immediately implies the following.
\begin{corollary}
Let $f : \{-1,1\}^n \to \{-1,1\}$ be a hierarchical multilinear function of non-linearity $\eps>0$ and depth $d = \Omega(\log n)$ that is balanced, i.e., $\E[f(X)]=0$ where $X\sim\Unif(\set{-1,1}^n)$.
Let $\bar{f}$ be the $2n$-extension of $f$, given by 
$\bar{f}(x_1,\ldots,x_{2n}) = f(x_1,\ldots,x_n)$.
Consider a fully connected neural network of size $E$ with initialization that is target
agnostic in the sense that it is invariant under permutations of the inputs. 
Let $f^{(t)}_{\mathrm{NN}}$ denote the output of noisy-GD with gradient range $A$, batch-size $b$, learning rate $\gamma$, and noise scale $\sigma$ (see \cite[Definition 8]{ABCKLRZ:22}) after $t$ time steps. Then, if $1 - \rho \leq 0.25$  and $b$ is sufficiently large, then for any $\delta < \eps$ the generalization error is at least 
$
0.5 - o(1)$ as long as 
\[
\gamma t \sqrt{E} A (1-\delta)^{d-C_{\eps,\delta} \log(\frac{1}{1-\rho})}  \ll \sigma. 
\] 
In the case where $\eps = 1$, we get generalization error at least $0.5 - o(1)$ as long as 
$\gamma t \sqrt{E} A \rho^{2^{d-2}} \ll \sigma$.
\end{corollary}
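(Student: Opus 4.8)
The plan is to invoke \cite[Theorem 1]{ABCKLRZ:22} as a black box whose hypotheses our setup is engineered to satisfy, and then feed in the noise-stability bound of \Cref{theorem:multilinear}. First I would recall that their theorem lower bounds the generalization error of noisy gradient descent on a fully connected network of size $E$ (with gradient range $A$, batch size $b$, learning rate $\gamma$, noise scale $\sigma$, run for $t$ steps) started from a target-agnostic, permutation-invariant initialization, and that this bound is governed by a single scalar measuring the initial alignment between the network's gradient signal and the target. The $2n$-extension $\bar f$, together with the permutation-invariance of the initialization, is precisely the symmetry hypothesis of their theorem: because the initialization cannot distinguish the $n$ relevant coordinates among the $2n$ inputs, their argument controls this scalar by the noise stability of $\bar f$ at the correlation parameter $\rho$.

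Second I would identify this governing scalar with $\Corr(\bar f(X),\bar f(Y))$ for $\rho$-correlated inputs $(X,Y)$. Since $\bar f(x_1,\dots,x_{2n})=f(x_1,\dots,x_n)$ ignores the final $n$ coordinates and the pairs $(X_i,Y_i)$ are independent across $i$, we have $\Corr(\bar f(X),\bar f(Y))=\Corr(f((X_1,\dots,X_n)),f((Y_1,\dots,Y_n)))$, so the extension leaves the noise stability unchanged. Balancedness $\E[f(X)]=0$ ensures that the relevant quantity is the centered correlation appearing in \Cref{theorem:multilinear}, with no constant part of $f$ inflating the signal, while the conditions $1-\rho\le 0.25$ and $b$ sufficiently large are exactly the regularity requirements carried over from \cite[Theorem 1]{ABCKLRZ:22}.

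Third, with this identification in hand, I would simply substitute. \Cref{theorem:multilinear} gives, for any $\delta<\eps$,
\[ \Corr(\bar f(X),\bar f(Y)) = \Corr(f(X),f(Y)) \le (1-\delta)^{d-C_{\eps,\delta}\log\paren{\frac{1}{1-\rho}}}, \]
so the black-box smallness condition $\gamma t\sqrt E\,A\cdot\Corr(\bar f(X),\bar f(Y))\ll\sigma$ is implied by $\gamma t\sqrt E\,A(1-\delta)^{d-C_{\eps,\delta}\log(1/(1-\rho))}\ll\sigma$, yielding generalization error $0.5-o(1)$. For the $\eps=1$ case I would instead invoke the doubly exponential decay established in \Cref{remark:multilinear_epsilon1}, which bounds the same correlation by $\rho^{2^{d-2}}$, giving the stated second condition $\gamma t\sqrt E\,A\rho^{2^{d-2}}\ll\sigma$.

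The main obstacle is the bookkeeping in the first two steps: verifying that the scalar controlling the lower bound in \cite[Theorem 1]{ABCKLRZ:22} is genuinely a monotone function of the noise stability $\Corr(\bar f(X),\bar f(Y))$ rather than some related but distinct Fourier functional, and that any normalization or square-root factors are absorbed into the $\ll$ of the conclusion. Everything else---preservation of noise stability under the $2n$-extension, the role of balancedness, and the substitution of \Cref{theorem:multilinear} and \Cref{remark:multilinear_epsilon1}---is routine once this identification is pinned down.
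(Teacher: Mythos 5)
Your proposal matches the paper's own derivation exactly: the paper supplies no separate proof, stating only that the corollary ``immediately'' follows from \cite[Theorem 1]{ABCKLRZ:22} combined with \Cref{theorem:multilinear} (and, for $\eps=1$, the doubly exponential bound of \Cref{remark:multilinear_epsilon1}), which is precisely the route you take. The one caveat is the bookkeeping issue you already flag yourself: \Cref{remark:multilinear_epsilon1} gives $\rho^{2^{d}}$ rather than the stated $\rho^{2^{d-2}}$, so the weaker exponent in the corollary must be absorbing normalization/square-root factors from the black-box theorem, which is harmless since $\rho^{2^{d}}\le\rho^{2^{d-2}}$.
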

Note that the bound in the case $\eps < 1$ is polynomial in $n$, while we get super-polynomial lower bound in the case where $\eps = 1$ (see  \cref{remark:multilinear_epsilon1} below).

We conjecture that the lower bounds of Abbe, Bengio, Cornacchiam, Kleinberg, Lotfi, Raghu and Zhang~\cite{ABCKLRZ:22} should extend in a straightforward way to other product spaces.

We finally note briefly a different set of results involving noise sensitivity and deep neural networks, i.e., the results of \cite{jonasson2023noise}, which studied the noise sensitivity of ReLU networks with i.i.d.\ Gaussian weights.

\subsection{Section Overview}
In \Cref{sec:multilinear}, we introduce the necessary definitions to prove the results concerning multilinear functions.
Then in \Cref{sec:non_separable}, we first introduce the necessary formalization of correlated probability spaces in \Cref{subsec:correlated_spaces}, then introduce the Efron--Stein decomposition in \Cref{subsec:Efron_Stein} and its fundamental properties, especially in relation to Markov operators; finally, properly equipped, we can prove the results concerning general functions.

\section{Noise stability of compositions of multilinear functions}\label{sec:multilinear}
\subsection{Preliminary setup}
For any probability space $(\Omega,\P)$, function $f\in L^2(\Omega,\P)$, and subset $V\subseteq L^2(\Omega,\P)$, define the (squared) distance between $f$ and $V$ by
\[ d(f,V) = \inf_{g\in V}\frac{\E[(f-g)^2]}{\Var(f)}. \]
If $f$ is almost surely constant, i.e., $\Var(f)=0$, we define $d(f,V)=0$.
Similarly, throughout this paper we use the convention $\Corr(X,Y)=0$ if either $X$ or $Y$ are almost surely constant.
We now specialize to the case where $\P$ is a product measure on product space $\Omega$ over laws of real-valued random variables $X_1,X_2,\dots,X_n$.
In the product measure, these random variables are mutually independent, and we additionally assume the random variables have finite second moment, i.e., $\E[X_i^2]<\infty$ for all $i\in[n]$.
Beyond this, we make no further assumptions on the distributions of the random variables, e.g., some may be discrete, others continuous, or neither.
We will consider the setting where $f\in L^2(\Omega,\P)$ is multilinear, i.e., $f$ is of the form
\[ f(x_1,\dots,x_n) = \sum_{S\subseteq[n]}a_S \prod_{i\in S}x_i \]
for some coefficients $a_S\in\R$.
Note that any function of the above form is in $L^2(\Omega,\P)$.
In the Boolean case, i.e., when $X_1,\dots,X_n$ are each random variables supported on at most two points, note that all $f\in L^2(\Omega,\P)$ are multilinear in $X_1,\dots,X_n$.

Let $\Lin$ denote the subspace of $L^2(\Omega,\P)$ spanned by linear functions, i.e.,
\[ \Lin=\operatorname{span}(\{1,x_1,x_2,\dots,x_n\}). \]
For each $i\in[n]$, define the normalized random variable $\widetilde X_i = \frac{X_i-\E[X_i]}{\sqrt{\Var(X_i)}}$, which has mean 0 and variance 1, and similarly denote $\widetilde x_i = \frac{x_i-\E[X_i]}{\sqrt{\Var(X_i)}}$ for all $x_i\in\R$.
Then for any multilinear $f$, we let $\widehat f(S)\in\R$ for $S\subseteq[n]$ denote the unique coefficients satisfying
\[ f(x_1,\dots,x_n) = \sum_{S\subseteq[n]}\widehat f(S)\prod_{i\in S}\widetilde x_i. \]
For each $S\subseteq[n]$, we let $\chi_S:\R^n\to\R$ denote the function given by
\[ \chi_S(x_1,\dots,x_n) = \prod_{i\in S}\widetilde x_i. \]
By independence of $X_1,\dots,X_n$, we have $\set{\chi_S:S\subseteq[n]}$ is a set of orthonormal multilinear polynomials: for distinct $S,S'\subseteq[n]$, we have $\E[\chi_S\chi_{S'}]=0$ and $\E[\chi_S^2]=1$.
Also note that $\E[\chi_S]=\1{S=\emptyset}$.
Hence, we have
\[ \E[f^2] = \sum_{S\subseteq[n]}\widehat f(S)^2 \quad\text{and}\quad \Var(f) = \sum_{S\neq\emptyset}\widehat f(S)^2. \]
From this, it is easy to see that
\[ d(f,\Lin) = \frac{\displaystyle\sum_{\abs{S}\geq2}\widehat f(S)^2}{\displaystyle\sum_{S\neq\emptyset}\widehat f(S)^2}. \]
For any $S\subseteq[n]$, we let $X_S$ denote the random vector consisting of the $X_i$ for $i\in S$, and similarly define $Y_S$, as well as $x_S$ and $y_S$ for $x,y\in\R^n$.
For example, $\chi_S(x)$ depends only on $x_S$.
\subsection{Main lemma}
\Cref{lemma:multilinear} shows that if we have some noisy observation $Y_i$ of each $X_i$, namely $\Corr(X_i,Y_i)\leq\rho$ for all $i\in[n]$ for some constant $\rho<1$, and $f$ is far from linear, then $\Corr(f(X),f(Y))<\rho$, with this correlation displaying exponential decay with respect to application of $f$.
We now prove this lemma.
\begin{proof}[Proof of \Cref{lemma:multilinear}]
    Note that because $X_1,\dots,X_n$ are mutually independent, and $Y_1,\dots,Y_n$ are also mutually independent, we have $\Var(f(X))=\Var(f(Y))$ as 
    \[ \E[\chi_S(X)\chi_{S'}(X)]=\E[\chi_S(Y)\chi_{S'}(Y)]=\1{S=S'}. \]
    We similarly also have $\E[f(X)]=\E[f(Y)]=\widehat f(0)$ as $\E[\chi_S(X)]=\E[\chi_S(Y)]=\1{S=\emptyset}$.
    Thus, without loss of generality, we can rescale and shift $f$ so that $\Var(f)=1$ and $\E[f]=0$.
    Then we expand
    \begin{align*}
        \Corr(f(X),f(Y))
        &= \E[f(X)f(Y)]
        = \sum_{S\neq\emptyset}\sum_{S'\neq\emptyset}\widehat f(S)\widehat f(S')\E[\chi_S(X)\chi_{S'}(Y)].
    \end{align*}
    Note that if $S\neq S'$, we have $\E[\chi_S(X)\chi_{S'}(Y)]=0$.
    Then either $S\not\subseteq S'$ or $S'\not\subseteq S$.
    Say $S\not\subseteq S'$.
    Then
    \begin{align*}
        \E[\chi_S(X)\chi_{S'}(Y)]
        &=\E[\E[\chi_S(X)\chi_{S'}(Y)|(X_{S'},Y_{S'})]]
        \\ &=\E[\E[\chi_S(X)|(X_{S'},Y_{S'})]\chi_{S'}(Y)]
        \\ &=\E[0\cdot\chi_{S'}(Y)]
        \\ &=0.
    \end{align*}
    The case $S'\not\subseteq S$ follows symmetrically.
    If $S=S'$, then by independence of the $(X_i,Y_i)$ we have
    \begin{align*}
        \E[\chi_S(X)\chi_S(Y)]
        &= \prod_{i\in S}\E[\widetilde X_i \widetilde Y_i]
        = \prod_{i\in S}\Corr(X_i,Y_i)
        \leq \rho^{\abs{S}}.
    \end{align*}
    Thus,
    \[ \Corr(f(X),f(Y)) \leq \sum_{S\neq\emptyset}\rho^{\abs{S}}\widehat f(S)^2. \]
    Recalling that
    \[ \Var(f) = \sum_{S\neq\emptyset} \widehat f(S)^2 = 1 \]
    while
    \[ d(f,\Lin)=\sum_{\abs{S}\geq2}\widehat f(S)^2 \geq\varepsilon,\]
    we obtain the bound
    \[ \Corr(f(X),f(Y)) \leq (1-\varepsilon)\rho+\varepsilon\rho^2, \]
    as desired.
\end{proof}
Note that this bound is sharp when the conditions in the statement of the lemma are tight, i.e., $\Corr(X_i,Y_i)=\rho$ for all $i$, and $d(f,\Lin)=\varepsilon$, and $f$ is of degree 2.
\begin{remark}\label{remark:noise_stability}
    In the Boolean setting, i.e., when the $X_i$ and $Y_i$ are all Boolean random variables, when $\Corr(X_i,Y_i)=\rho$ for all $i$, then the quantity $\Corr(f(X),f(Y))$ that we study in \Cref{lemma:multilinear} coincides with the well-studied noise stability $\operatorname{Stab}_\rho(f)$.
    We refer readers to \cite{ODonnell:14} for a comprehensive overview of the analysis of Boolean functions.
    As we wish to state results that apply for general settings beyond just the Boolean setting, we will not use the specific definition of noise stability $\operatorname{Stab}_\rho(f)$, though the spirit remains the same. 
\end{remark}
\subsection{Hierarchical multilinear functions}\label{subsec:hierarchical_multilinear}
We now consider functions consisting of a composition of multilinear functions in a hierarchical structure.
Specifically, for any positive integers $n$ and $d$, we define a function $f:\R^n\to\R$ to be a \dfn{hierarchical multilinear function} of non-linearity $\varepsilon>0$ and depth $d$ recursively as follows.
A hierarchical multilinear function of non-linearity $\varepsilon>0$ and depth 1 is simply a function satisfying the conditions of \Cref{lemma:multilinear}.
Note that $d(f,\Lin)$ does not depend on the underlying random variables $X_1,\dots,X_n$.
A hierarchical multilinear function of non-linearity $\varepsilon>0$ and depth $d\geq2$ is a function $f:\R^n\to\R$ of the form
\[ f(x) = g(h_1(x_{\mc P_1}),\dots,h_m(x_{\mc P_m}))\]
for some partition $\mc P$ of $[n]$ with $m\geq 2$ parts and hierarchical multilinear functions $g,h_1,\dots,h_m$ of non-linearity $\varepsilon$, where $g$ has depth 1 and each $h_i$ has depth at least $d-1$.
While this definition allows for the $h_i$ to have depth greater than or equal to $d$, note that any depth-$d$ hierarchical multilinear function of non-linearity $\varepsilon>0$ must be a function of at least $2^d$ variables, so our recursive definition does terminate and is well-defined.

As previously mentioned, all functions on Boolean random variables are multilinear, so any function on Boolean random variables of this hierarchical form whose component functions all have Boolean outputs, i.e., take on two values, except for potentially the top component function, are hierarchical multilinear functions.
Thus, our definition of a hierarchical multilinear function includes all hierarchical Boolean functions.

We can now prove \Cref{theorem:multilinear}, which shows that $\Corr(f(X),f(Y))$ decays exponentially in depth $d$, for hierarchical multilinear functions of positive non-linearity.
\begin{proof}[Proof of \Cref{theorem:multilinear}]
    Let $g(x)=(1-\varepsilon)x+\varepsilon x^2$.
    For positive integer $m\geq 1$, let $g^m$ denote the composition of $g$ a total of $m$ times.
    Because $X_i$ and $Y_i$ have the same first and second moments, and because the $(X_i,Y_i)$ are mutually independent across $i$, any multilinear function $h$ will have $\E[h(X)]=\E[h(Y)]$ and $\E[h(X)^2]=\E[h(Y)^2]$, and thus we can repeatedly apply \Cref{lemma:multilinear} for all component functions of our hierarchical multilinear function $f$, as the first and second moment conditions of \Cref{lemma:multilinear} inductively hold.
    By repeatedly applying \Cref{lemma:multilinear} throughout the hierarchical structure of $f$, and noting that $g$ is an increasing function on $[0,1]$ with $0 \leq g(x)<x$ for all $0 \leq x < 1$, we have $\Corr(f(X),f(Y))\leq g^d(\rho)$, so it suffices to show $g^d(\rho)\leq \paren{1-\delta}^{d-C_{\varepsilon,\delta}\log\paren{\frac{1}{1-\rho}}}$ for all $d\geq 1$ and $0\leq\rho<1$ for some constant $C_{\varepsilon,\delta}$.
    Note that $\frac{g(x)}{x}=1-\varepsilon+\varepsilon x$, so there exists some $\alpha=\alpha_{\varepsilon,\delta}\in(0,1)$ such that for all $x\leq\alpha$, we have $g(x)\leq(1-\delta)x$.
    Thus it suffices to show there exists some nonnegative integer $N\leq C_{\varepsilon,\delta}\log\paren{\frac{1}{1-\rho}}$ such that $g^{N}(\rho)\leq\alpha$.
    If $\rho\leq\alpha$, we simply take $N=0$.
    Otherwise, note that for $\alpha\leq x<1$, we have
    \[ \frac{1-g(x)}{1-x} = 1+\varepsilon - \varepsilon(1-x) \geq 1+\varepsilon\alpha>1,\]
    so we may take $N\geq\log_{1+\varepsilon\alpha}\paren{\frac{1-\alpha}{1-\rho}}.$
\end{proof}
\begin{remark}\label{remark:multilinear_tightness}
    We remark that this bound is essentially tight.
    Suppose all component functions defining our hierarchical multilinear function $f$ have non-linearity exactly $\varepsilon>0$, i.e., each component function $g$ satisfies $d(g,\Lin)=\varepsilon$, and $\Corr(X_i,Y_i)=\rho$ for all $i$.
    With these tight conditions, following the proof of \Cref{lemma:multilinear}, it is straightforward to see that we could have proven $\Corr(f(X),f(Y))\geq(1-\varepsilon)\rho$ instead of \eqref{eq:lemma_multilinear}.
    Inductively applying this bound instead of \Cref{lemma:multilinear}, we see that our hierarchical multilinear function of depth $d$ has $\Corr(f(X),f(Y))\geq(1-\varepsilon)^d\rho$.
    Hence, it is impossible to have exponential rate smaller than $1-\varepsilon$, and we have provided a bound with exponential rate $\alpha$ for all $\alpha>1-\varepsilon$.
\end{remark}
\begin{remark}\label{remark:multilinear_epsilon1}
    Note that in the case $\varepsilon=1$, i.e., $f$ is a hierarchical multilinear function where each component function is completely uncorrelated with any linear function, \Cref{lemma:multilinear} actually implies $\Corr(f(X),f(Y))\leq\rho^{2^d}$.
    In this case, we have doubly exponential decay in depth, with decay rate depending on $\rho$, stronger than the singly exponential decay provided by \Cref{theorem:multilinear}, where the decay rate depends on $\varepsilon$.
\end{remark}
\begin{remark}\label{remark:multilinear_resilient}
    In the literature of cryptography, a Boolean function is \dfn{$t$-resilient} if it is balanced and remains unpredictable even if an adversary knows up to $t$ of the $n$ input bits.
    Concretely, this means $\widehat f(S)=0$ for all $\abs{S}\leq t$.
    We refer interested readers to \cite[Chapter 8]{CramaHammer} for further background on Boolean functions in cryptography.
    Our results do not care about the balanced condition, i.e., $\widehat f(\emptyset)=0$, but aside from this, our case $\varepsilon=1$ in the Boolean setting corresponds to 1-resilient functions.
    It is straightforward to see, by adapting the proof of \Cref{lemma:multilinear}, that for any positive integer $t$, if we have a hierarchical function whose components are all $t$-resilient Boolean functions, then we have $\Corr(f(X),f(Y))\leq\rho^{(t+1)^d}$.
\end{remark}
\begin{remark}\label{remark:multilinear_parity}
    One example of such a component function is the parity function in the Boolean case, i.e., $\operatorname{Parity}(x)=x_1\cdot x_2\cdots x_n$, where $X_i\sim\operatorname{Unif}(\set{\pm1})$ for all $i$.
    This function takes value 1 if an even number of bits are on, i.e., have value $-1$, and value $-1$ if an odd number of bits are on.
    In the coding theory literature, this function is more commonly written as
    \[ \operatorname{Parity}(x_1,\dots,x_n)=x_1\oplus\cdots\oplus x_n, \]
    where $\oplus$ denotes the XOR (exclusive OR), and each $x_i\in\{0,1\}$ is a bit.
    However, to fit this into our multilinear polynomial framework, we convert the additive group $\{0,1\}$ modulo 2, i.e., $\Z/2\Z$, into the multiplicative group $\{1,-1\}$, which is a standard transformation in the literature of analysis of Boolean functions.
    Assuming $\Corr(X_i,Y_i)=\rho$ for all $i$, it is well-known, and can be seen by adapting the proof of \Cref{lemma:multilinear}, that $\Corr(\operatorname{Parity}(X),\operatorname{Parity}(Y))=\rho^n$, i.e., exponential in $n$.
    Note that one can view the parity function as a hierarchical function of smaller component parity functions, and by expressing the $n$-bit parity function as a hierarchical function of depth $\floor{\log_2n}$ with components being 2-bit parity functions, we have, via the $\varepsilon=1$ version of \Cref{theorem:multilinear} discussed in \Cref{remark:multilinear_epsilon1}, that 
    \[ \Corr(\operatorname{Parity}(X),\operatorname{Parity}(Y)) \leq \rho^{2^{\floor{\log_2n}}} < \rho^{n/2}, \]
    so our result still demonstrates exponential decay in $n$, albeit with a different rate than the true rate.
\end{remark}
For example, note that \Cref{theorem:multilinear} shows that the noise stability of the Boolean function on $3^d$ bits consisting of recursively composing the majority function on 3 bits, which is not linear, decays exponentially in $d$.
Thus, the recursive majority function is noise sensitive for logarithmic depth. 
Recall that the fact that the recursive majority function is noise sensitive is folklore; see, for example, \cite{BenorLinial:90,BeKaSc:99}.

\Cref{theorem:multilinear} implies low-degree bounds on hierarchical multilinear functions.
To make this concrete, the following lemma shows how bounds on noise stability, that is, $\Corr(f(X),f(Y))$, imply bounds on the maximum correlation between $f$ and any multilinear function of degree at most $D$. 
\begin{lemma}\label{lemma:low_degree_multilinear}
    Suppose $\{(X_i,Y_i):i\in[n]\}$ are mutually independent, where for some $0\leq\rho\leq1$, we have $\Corr(X_i,Y_i)=\rho$ for all $i$.
    Suppose $X_i$ and $Y_i$ have the same first and second moments, i.e., $\E[X_i]=\E[Y_i]$ and $\E[X_i^2]=\E[Y_i^2]<\infty$, for all $i\in[n]$.
    Let $f:\R^n\to\R$ be a multilinear function, and let $M$ denote the maximum correlation between $f$ and any multilinear function of degree at most $D$, i.e.,
    \[ M = \sup_{\substack{g:\R^n\to\R\text{ multilinear}\\\deg(g)\leq D}} \Corr(f(X),g(X)). \]
    Then
    \[ M \leq \rho^{-D/2}\sqrt{\Corr(f(X),f(Y))}. \]
\end{lemma}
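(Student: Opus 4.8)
The plan is to reduce the statement to a comparison of multilinear (Fourier) weights on low versus all levels. First I would normalize: since $\Corr$ is invariant under affine transformations of either argument, I may assume $\E[f]=0$ and $\Var(f)=1$, so that $\sum_{S\neq\emptyset}\widehat f(S)^2=1$, and likewise assume any competitor $g$ of degree at most $D$ satisfies $\E[g]=0$ and $\Var(g)=1$, so that $\sum_{S\neq\emptyset}\widehat g(S)^2=1$.

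Next I would handle the numerator, i.e., the correlation of $f$ with a low-degree function. Expanding both functions in the orthonormal basis $\set{\chi_S}$ and using orthonormality of the $\chi_S$ under the product measure gives $\Corr(f(X),g(X))=\E[f(X)g(X)]=\sum_{1\leq\abs{S}\leq D}\widehat f(S)\widehat g(S)$, where the sum runs over nonempty $S$ with $\abs{S}\leq D$ because $g$ has degree at most $D$ and both functions have mean zero. Cauchy--Schwarz together with $\sum_{1\leq\abs{S}\leq D}\widehat g(S)^2\leq 1$ then yields $M\leq\big(\sum_{1\leq\abs{S}\leq D}\widehat f(S)^2\big)^{1/2}$; that is, the maximal correlation of $f$ with a function of degree at most $D$ is exactly the $\ell^2$ mass of $f$ on levels $1,\dots,D$.

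For the denominator, i.e., the noise stability, I would reuse the computation inside the proof of \Cref{lemma:multilinear}: since here $\Corr(X_i,Y_i)=\rho$ holds with equality and the $(X_i,Y_i)$ are independent across $i$, the cross terms $\E[\chi_S(X)\chi_{S'}(Y)]$ vanish for $S\neq S'$ and $\E[\chi_S(X)\chi_S(Y)]=\rho^{\abs{S}}$ exactly, so $\Corr(f(X),f(Y))=\sum_{S\neq\emptyset}\rho^{\abs{S}}\widehat f(S)^2$. It is worth noting that this exact identity, rather than the one-sided bound of \Cref{lemma:multilinear}, is precisely why this lemma's hypothesis demands equality $\Corr(X_i,Y_i)=\rho$ rather than an inequality.

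Finally I would combine the two estimates. Since $0\leq\rho\leq1$, every $S$ with $1\leq\abs{S}\leq D$ satisfies $\rho^{\abs{S}}\geq\rho^{D}$, so discarding the levels above $D$ and bounding the surviving coefficients gives
\[ \Corr(f(X),f(Y)) = \sum_{S\neq\emptyset}\rho^{\abs{S}}\widehat f(S)^2 \geq \rho^{D}\sum_{1\leq\abs{S}\leq D}\widehat f(S)^2 \geq \rho^{D}M^2, \]
and rearranging yields $M\leq\rho^{-D/2}\sqrt{\Corr(f(X),f(Y))}$. The argument is essentially obstacle-free; the only points requiring care are the edge cases (when $\rho=0$ the stated bound is vacuous, and when $\Var(f)=0$ we have $M=0$ by the stated convention) and tracking which direction of the identity for $\Corr(f(X),f(Y))$ is used, namely the lower bound made available by the equality hypothesis.
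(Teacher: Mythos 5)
Your proposal is correct and follows essentially the same route as the paper's proof: identify $M$ with the normalized Fourier mass of $f$ on levels $1$ through $D$ via orthonormality and Cauchy--Schwarz, compute $\Corr(f(X),f(Y))$ exactly as the $\rho^{\abs{S}}$-weighted Fourier mass using the equality hypothesis, and compare the two by discarding levels above $D$. The only cosmetic difference is that you normalize $\Var(f)=1$ up front whereas the paper carries the $\Var(f)$ factors through explicitly.
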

\begin{proof}
    We wish to bound
    \[ M = \sup_{\substack{g:\R^n\to\R\text{ multilinear}\\ \deg(g)\leq D}}\Corr(f(X),g(X)). \]
    For any such $g$, decompose $f$ and $g$ as
    \[ f = \sum_{S\subseteq[n]}\widehat f(S)\chi_S, \quad g = \sum_{S\subseteq[n]}\widehat g(S)\chi_S, \]
    and without loss of generality rescale $g$ so that $\Var(g)=1$, i.e.,
    \[ \sum_{S\neq\emptyset}\widehat g(S)^2 = 1. \]
    Following the proof of \Cref{lemma:multilinear}, we know
    \[ \Corr(f(X),f(Y)) = \frac{1}{\Var(f)}\sum_{S\neq\emptyset}\rho^{\abs{S}}\widehat f(S)^2. \]
    Similarly, i.e., via orthogonality of $\set{\chi_S:S\subseteq[n]}$, by the Cauchy--Schwarz inequality, we have
    \begin{align*}
        \Corr(f(X),g(X))
        = \frac{1}{\sqrt{\Var(f)}}\sum_{S\neq\emptyset}\widehat f(S)\widehat g(S)
        = \frac{1}{\sqrt{\Var(f)}}\sum_{1 \leq \abs{S}\leq D}\widehat f(S)\widehat g(S)
        \leq \frac{\sqrt{\displaystyle\sum_{1\leq\abs{S}\leq D}\widehat f(S)^2}}{\sqrt{\Var(f)}},
    \end{align*}
    with equality if and only if
    \[ g = C+\frac{\displaystyle\sum_{1\leq\abs{S}\leq D}\widehat f(S)\chi_S}{\sqrt{\displaystyle\sum_{1\leq\abs{S}\leq D}\widehat f(S)^2}}\]
    for some constant $C\in\R$.
    Thus,
    \[ M = \frac{1}{\sqrt{\Var(f)}}\sqrt{\sum_{1\leq\abs{S}\leq D}\widehat f(S)^2}. \]
    Then we relate $\Corr(f(X),f(Y))$ with $M$ via
    \begin{align*}
        \Corr(f(X),f(Y))
        &= \frac{1}{\Var(f)}\sum_{S\neq\emptyset}\rho^{\abs{S}}\widehat f(S)^2
        \\ &\geq \frac{1}{\Var(f)}\sum_{1\leq\abs{S}\leq D}\rho^{\abs{S}}\widehat f(S)^2
        \\ &\geq \frac{1}{\Var(f)}\sum_{1\leq\abs{S}\leq D}\rho^D\widehat f(S)^2
        \\ &= \rho^D M^2,
    \end{align*}
    which rearranges to our desired inequality.
\end{proof}
Using \Cref{theorem:multilinear}, for any hierarchical multilinear function $f$ of non-linearity $\varepsilon>0$ and depth $d$, and for any $\delta<\varepsilon$ and $0 \leq \rho < 1$, applying \Cref{lemma:low_degree_multilinear} yields \eqref{eq:low_degree_multilinear}.
Note that this is because, as previously discussed in \Cref{remark:correlation_construction}, we can always construct a $\rho$-correlated $Y$ satisfying the conditions of \Cref{theorem:multilinear}.
In particular, for fixed $D$, our bound \eqref{eq:low_degree_multilinear} decays exponentially in depth $d$.

This in turn lets us prove \Cref{theorem:sq_learning_Boolean}.
\begin{proof}[Proof of \Cref{theorem:sq_learning_Boolean}]
    From the proof of \Cref{lemma:low_degree_multilinear}, we have for any $D\in[n]$,
    \[ M = \frac{1}{\sqrt{\Var(f)}}\sqrt{\sum_{1\leq\abs{S}\leq D}\widehat f(S)^2} \geq \sqrt{\sum_{1\leq\abs{S}\leq D}\widehat f(S)^2} \]
    as $f(x)\in[-1,1]$ for all $x$.
    Then by \eqref{eq:low_degree_multilinear}, we have, fixing $\rho\in(0,1)$ to be a constant,
    \[ \sqrt{\sum_{1\leq\abs{S}\leq D}\widehat f(S)^2} \leq C_{\rho,\varepsilon,\delta} \rho^{-D/2}(1-\delta)^{d/2} = e^{\Theta(D)-\Theta(\log n)}, \]
    for some constant $C_{\rho,\varepsilon,\delta}>0$.
    Pick constants $c_1,c_2>0$ sufficiently small with $c_2\geq c_1$ so that, letting $D=\frac{c_1\log n}{\log\log n}$ and $x=n^{c_2}$, we have
    \[ xe^D C_{\rho,\varepsilon,\delta} \rho^{-D/2}(1-\delta)^{d/2} = o(1). \]
    Let this expression be denoted by $\tau$.
    \cite[Theorem 3.2]{dachman2014approximate} implies there exists constant $K>0$ such that $f$ is $O\paren{\tau+\exp\paren{-Kx^{2/D}}n^{2D+2}}$-approximately $D$-resilient.
    Note that, re-picking $c_1$ as necessary, this asymptotic expression is $o(1)$, because
    \[ -Kx^{2/D}+(2D+2)\log n= -K(\log n)^{\frac{2c_2}{c_1}}+\Theta\paren{\frac{\log^2(n)}{\log\log(n)}} =-\omega(1). \]
    Applying \cite[Theorem 1.1]{dachman2014approximate} then completes the proof.
\end{proof}
A similar proof yields \Cref{theorem:sq_learning_percolation}.
\begin{proof}[Proof of \Cref{theorem:sq_learning_percolation}]
    Garban, Pete, and Schramm \cite{garban2010fourier}
    showed that
    \[ \sum_{1\leq\abs{S}\leq D}\widehat f(S)^2 = n^{-\frac{1}{2}+o(1)}D^{\frac{2}{3}+o(1)} \]
    for any $D\in\bracket{N^3}$.
    Let $D=\frac{c\log n}{\log\log n}$ and $x=n^c$ for some constant $c\in\paren{0,\frac{1}{4}}$.
    Then we have
    \[ xe^D\sqrt{\sum_{1\leq S\leq D}\widehat f(S)^2}=n^cn^{\frac{c}{\log\log n}}n^{-\frac{1}{4}+o(1)}=o(1) \]
    as $c<\frac{1}{4}$.
    Let this expression be denoted by $\tau$.
    Then \cite[Theorem 3.2]{dachman2014approximate} implies there exists constant $K>0$ such that $f$ is $O\paren{\tau+\exp\paren{-Kx^{2/D}}N^{3(2D+2)}}$-approximately $D$-resilient.
    This asymptotic expression is $o(1)$ as $\tau=o(1)$ and
    \[ -Kx^{2/D}+(24D+24)\log n = -K(\log n)^2 + \Theta\paren{\frac{\log^2(n)}{\log\log(n)}}=-\omega(1). \]
    Applying \cite[Theorem 1.1]{dachman2014approximate} then completes the proof.
\end{proof}

\section{Non-separable functions}\label{sec:non_separable}
\subsection{Correlated probability spaces}\label{subsec:correlated_spaces}
If we hope to generalize \Cref{theorem:multilinear} beyond multilinear functions, we will need to strengthen our initial assumption, i.e., that $\Corr(X_i,Y_i)\leq\rho$.
Because $\Corr(X_i,Y_i)$ only captures linear relationships between $X_i$ and $Y_i$, if our function is not multilinear, it has the potential to have higher correlation $\Corr(f(X),f(Y))$ than that of its components, i.e., $\Corr(X_i,Y_i)$, by utilizing these nonlinear relationships.
Recall \Cref{example:cos_arccos,example:recursive_majority}, which illustrate how nonlinear relationships in a single variable can lead to noise stability that does not decay with depth.
To this end, we broaden our concept of correlation via the following definition of correlation between probability spaces, sometimes referred to as the ``Hirschfeld--Gebelein--R\'enyi maximal correlation'' as it was first introduced by Hirschfeld~\cite{hirschfeld1935connection} and Gebelein~\cite{Gebelein:41} and then studied by R\'enyi~\cite{Renyi:59}.
\begin{definition}
    Given a probability measure $\P$ defined on $\Omega=\prod_{i=1}^k \Omega_i$, we say that $\Omega_1,\dots,\Omega_k$ are \dfn{correlated spaces}.
    Given two linear subspaces $A$ and $B$ of $L^2(\P)$, we define the \dfn{correlation} between $A$ and $B$ by
    \[ \rho(A,B;\P)=\rho(A,B)=\sup\set{\Corr(f,g):f\in A, g\in B} \]
    For $i,j\in[k]$, we define the \dfn{correlation} $\rho(\Omega_i,\Omega_j;\P)$ by
    \[ \rho(\Omega_i,\Omega_j;\P)=\rho(\Omega_i,\Omega_j)=\rho(L^2(\Omega_i,\P),L^2(\Omega_j,\P);\P). \] 
    As $\rho$ also denotes a parameter in this paper, we will also use the notation $\Corr(\cdot)$ in place of $\rho(\cdot)$ for clarity. 
\end{definition}
For random vector $X:\Omega\to\R^n$, let $L^2(X)$ denote the linear subspace of $L^2(\P)$ that is $\sigma(X)$-measurable.
Each $f\in L^2(X)$ can be identified with a function $\mf f:\R^n\to\R$ satisfying $\E[\mf f(X)^2]<\infty$, and we will frequently make no distinction between these two functions.
Equipped with this new definition, we will consider the condition that $\Corr(L^2(X_i),L^2(Y_i))\leq\rho$, in place of the condition that $\Corr(X_i,Y_i)\leq\rho$.

We recall some theory from Mossel~\cite[\S 2.2]{Mossel_Gaussian} concerning noise correlation of functions.
For two random variables $X,Y:\Omega\to\R$, if $f\in L^2(X)$ satisfies $\E[f(X)]=0$ and $\E[f(X)^2]=1$, then the maximizer of $\abs{\E[f(X)g(Y)]}$ among $g\in L^2(Y)$ satisfying $\E[g(Y)^2]=1$ is given by 
\[ g=\frac{\E[f(X)|Y]}{\norm{\E[f(X)|Y]}_2} = \frac{\E[f(X)|Y]}{\sqrt{\E[\E[f(X)|Y]^2]}}, \]
and
\[ \abs{\E[f(X)g(Y)]} = \norm{\E[f(X)|Y]}_2\]
The function $\E[f(X)|Y]\in L^2(Y)$ is the \emph{Markov operator} from $L^2(X)$ to $L^2(Y)$, and as such we will denote this $g$ by $Tf\in L^2(Y)$.
We have $\E[Tf]=0$, so
\begin{equation}\label{eq:correlation_using_Markov}
    \Corr(L^2(X),L^2(Y)) = \sup\set{\E[f(X)Tf(X)]: f\in L^2(X),\E[f]=0,\E[f^2]=1}.
\end{equation}
\subsection{Efron--Stein decomposition}\label{subsec:Efron_Stein}
Returning to our setup in \Cref{sec:multilinear}, where $\P$ is a product measure on product space $\Omega$ over the laws of correlated pairs of random variables $(X_1,Y_1),\dots,(X_n,Y_n)$, we now introduce the Efron--Stein decomposition as our analog to the $\chi_S$ that decomposed our multilinear polynomials.
\begin{definition}
    Let $(\Omega_1,\mu_1),\dots,(\Omega_n,\mu_n)$ be laws of correlated pairs of real-valued random variables $(X_1,Y_1),\dots,(X_n,Y_n)$, and let $(\Omega,\mu)=\prod_{i=1}^n(\Omega_i,\mu_i)$.
    The \dfn{Efron--Stein decomposition} of $f\in L^2(X)$ is given by
    \[ f(x) = \sum_{S\subseteq[n]}f_S(x), \]
    where each function $f_S$ depends only on $x_S$ and for all $S\not\subseteq S'$, we have $\E[f_S|X_{S'}]=0$ almost surely.
    We can similarly define the Efron--Stein decomposition for any $g\in L^2(Y)$.
\end{definition}
It is well-known that the Efron--Stein decomposition exists and is unique \cite{EfronStein:81}.
A standard result (see, for example, \cite[Proposition 2.11]{Mossel_Gaussian}, which only proves the result in the discrete case but naturally holds in general) states that the Efron--Stein decomposition ``commutes'' with the Markov operator, i.e., $(Tf)_S = T(f_S)\in L^2(Y)$, where $T:L^2(X)\to L^2(Y)$ is the Markov operator, which is given by $T=\bigotimes_{i=1}^n T_i$ for Markov operators $T_i:L^2(X_i)\to L^2(Y_i)$.
It thus is unambiguous to write $Tf_S$.
Similarly generalizing the proof of \cite[Proposition 2.12]{Mossel_Gaussian}, if $\Corr(L^2(X_i),L^2(Y_i))\leq\rho_i$ for some $\rho_i$ for each $i$, then for all $S\subseteq[n]$,
\begin{equation}\label{eq:Markov_operator_correlation_bound}
    \norm{Tf_S}_2 \leq \paren{\prod_{i\in S}\rho_i}\norm{f_S}_2.
\end{equation}
\subsection{Hierarchical non-separable functions}\label{subsec:hierarchical_nonseparable}
Recall the goal of this section is to generalize \Cref{theorem:multilinear} beyond multilinear functions.
Just as we had to broaden our assumption that $\Corr(X_i,Y_i)\leq\rho$ to $\Corr(L^2(X_i),L^2(Y_i))\leq\rho$, we must also broaden our assumption that $f$ is non-linear to account for non-linear relationships in a single variable.
To this end, we replace the concept of non-linearity with non-separability, where recall an (additively) separable function $f:\R^n\to\R$ is a function of the form $f(x)=f_1(x_1)+\cdots+f_n(x_n)$ for $f_1,\dots,f_n:\R\to\R$.
Note that in the context of multilinear polynomials, separability and linearity are equivalent, so this definition of a separable function is consistent with our setup in \Cref{sec:multilinear}.
We can then define non-separability by the condition that
\[ \Corr(L^2(f(X)),L^2(X_1)+\cdots+L^2(X_n))\leq 1-\varepsilon. \]
For any $g\in L^2(f(X))$ and $h_i\in L^2(X_i)$ for all $i\in[n]$, where we normalize $\E[g]=0=\E[h_i]$ for all $i$ and $\E[g^2]=1=\E[(h_1+\cdots+h_n)^2]$, we have by the defining properties of the Efron--Stein decomposition, where we view $G=g\circ f\in L^2(X)$,
\begin{align*}
    \Corr(g,h_1+\cdots+h_n)
    &= \E[g(f(X))(h_1(X_1)+\cdots+h_n(X_n))]
    \\ &= \sum_{S\neq\emptyset}\E[G_S(h_1+\cdots+h_n)_S]
    \\ &= \sum_{i=1}^n\E[G_{\{i\}} h_i]
    \\ &\leq \sum_{i=1}^n\norm{G_{\{i\}}}_2\norm{h_i}
    \\ &\leq \sqrt{\sum_{i=1}^n\norm{G_{\{i\}}}_2^2}\sqrt{\sum_{i=1}^n\norm{h_i}_2^2}
    \\ &= \sqrt{\sum_{i=1}^n\norm{G_{\{i\}}}_2^2}.
\end{align*}
Note that by picking $h_i$ appropriately, one can make both applications of the Cauchy--Schwarz inequality tight, and thus
\begin{equation}\label{eq:epsilon_equivalence_1}
    \Corr(L^2(f(X)),L^2(X_1)+\cdots+L^2(X_n)) = \sup_{\substack{g\in L^2(f(X))\\\E[g]=0,\E[g^2]=1}}\sqrt{\sum_{i=1}^n\norm{G_{\{i\}}}_2^2}.
\end{equation}
Because of the square root on the right hand side, we will reparametrize $\varepsilon$ so that our non-separability condition is
\[ \Corr(L^2(f(X)),L^2(X_1)+\cdots+L^2(X_n))\leq\sqrt{1-\varepsilon}. \]
By naturally extending our definition of $d(f,V)$ to $d(U,V)$ for $U,V\subseteq L^2(\Omega,\P)$ by
\[ d(U,V) = \inf_{f\in U,g\in V}\frac{\E[(f-g)^2]}{\Var(f)}, \]
note that
\begin{align}
    \notag
    d(L^2(f(X)),L^2(X_1)+\cdots+L^2(X_n))
    &= \inf_{g,h_1,\dots,h_n}\frac{\displaystyle\sum_{S}\norm{G_S-(h_1)_S-\cdots-(h_n)_S)}_2^2}{\displaystyle\sum_{S\neq\emptyset}\norm{G_S}_2^2}
    \\ &= \inf_{\substack{g\in L^2(f(X))\\\E[g]=0,\E[g^2]=1}}\displaystyle\sum_{\abs{S}\geq2}\norm{G_S}_2^2, \label{eq:epsilon_equivalence_2}
\end{align}
where we have used the fact that $f\mapsto f_S$ is a linear operator, which follows from linearity of expectation.
Assuming $\E[g]=0$ and $\E[g^2]=1$, we have $G_\emptyset\equiv0$ and $\sum_{S\subseteq[n]}\norm{G_S}_2^2=1$, so we see that the extremizers in \eqref{eq:epsilon_equivalence_1} and \eqref{eq:epsilon_equivalence_2} are the same $g$, and letting $V=L^2(X_1)+\cdots+L^2(X_n)$,
\begin{equation}\label{eq:epsilon_equivalence}
    \Corr(L^2(f(X)),V)\leq\sqrt{1-\varepsilon} \iff d(L^2(f(X)),V) \geq \varepsilon.
\end{equation}
While the latter condition is more consistent with our condition in \Cref{sec:multilinear}, we will use the former condition to consistently use the language of correlated probability spaces.
Of course, this choice is purely stylistic, as these two conditions are equivalent.
We say a function $f\in L^2(X)$ satisfying this condition is \dfn{$\varepsilon$-non-separable}.
Likewise, we can define a \dfn{hierarchical function} $f\in L^2(X)$ of non-separability $\varepsilon>0$ and depth $d$ analogously to how we defined hierarchical multilinear functions.

We are now ready to prove \Cref{lemma:nonseparable}, our generalization of \Cref{lemma:multilinear} to non-separable functions.
\begin{proof}[Proof of \Cref{lemma:nonseparable}]
    Let $Z=f(X)$ and $W=f(Y)$.
    Pick arbitrary $g,h:\R\to\R$ such that $\E[g(Z)]=\E[h(W)]=0$ and $\E[g(Z)^2]=\E[h(W)^2]=1$.
    It suffices to show
    \[ \E[g(Z)h(W)]\leq(1-\varepsilon)\rho+\varepsilon\rho^2. \]
    Let
    \[ G=g\circ f\in L^2(X) \quad\text{and}\quad H=h\circ f\in L^2(Y). \]
    Note that for distinct $S,S'\subseteq[n]$, we have $\E[G_S(X) H_{S'}(Y)]=0$.
    This is because we either have $S\not\subseteq S'$ or $S'\not\subseteq S$, and in the former case, by the defining properties of the Efron--Stein decomposition and recalling the $(X_i,Y_i)$ are mutually independent across $i$,
    \begin{align*}
        \E[G_S(X)H_{S'}(Y)]
        &= \E[\E[G_S(X)H_{S'}(Y)|(X_{S'},Y_{S'})]]
        \\ &= \E[\E[G_S(X)|(X_{S'},Y_{S'})]H_{S'}(Y)]
        \\ &= \E[0\cdot H_{S'}(Y)]
        \\ &= 0.
    \end{align*}
    Thus,
    \begin{align*}
        \E[g(Z)h(W)]
        &= \sum_{S\neq\emptyset}\E[G_S(X)H_S(Y)]
        \\ &= \sum_{S\neq\emptyset}\norm{G_S}_2\norm{H_S}_2\E\bracket{\frac{G_S(X)}{\norm{G_S}_2}\frac{H_S(Y)}{\norm{H_S}_2}}
        \\ &\leq \sum_{S\neq\emptyset}\norm{G_S}_2\norm{H_S}_2\E\bracket{\frac{G_S(X)}{\norm{G_S}_2}T\paren{\frac{G_S}{\norm{G_S}_2}}}
        \\ &= \sum_{S\neq\emptyset}\norm{G_S}_2\norm{H_S}_2\norm{TG_S}_2
        \\ &\leq \sum_{S\neq\emptyset}\norm{G_S}_2\norm{H_S}_2\rho^{\abs{S}},
    \end{align*}
    where the first inequality uses the property that the Markov operator maximizes covariance as discussed in \Cref{subsec:correlated_spaces}, and the second inequality follows from \eqref{eq:Markov_operator_correlation_bound}.
    Applying the Cauchy--Schwarz inequality and then our non-separability condition, we bound
    \begin{align*}
        \E[g(Z)h(W)]
        &\leq \sqrt{\sum_{S\neq\emptyset}\norm{G_S}_2^2\rho^{\abs{S}}}\sqrt{\sum_{S\neq\emptyset}\norm{H_S}_2^2\rho^{\abs{S}}}
        \\&\leq (1-\varepsilon)\rho+\varepsilon\rho^2,
    \end{align*}
    which completes the proof.
\end{proof}
\Cref{theorem:nonseparable} follows from the same proof as that of \Cref{theorem:multilinear} using \Cref{lemma:nonseparable} in place of \Cref{lemma:multilinear}.
\begin{remark}\label{remark:nonseparable_epsilon1}
    Again, we note that in the case $\varepsilon=1$, i.e., $f$ is a hierarchical function where each component function has zero correlation with any function of a single one of its component variables, \Cref{lemma:nonseparable} implies $\Corr(L^2(f(X)),L^2(f(Y)))\leq\rho^{2^d}$.
    So, analogously to \Cref{remark:multilinear_epsilon1}, in this case we have doubly exponential decay in depth with decay rate depending on $\rho$, stronger than the singly exponential decay provided by \Cref{theorem:nonseparable} with decay rate depending on $\varepsilon$.
\end{remark}
Next, we state and prove the following general version of \Cref{lemma:low_degree_multilinear}, which we use to justify \eqref{eq:low_degree_general}.
\begin{lemma}\label{lemma:low_degree_general}
    Suppose $\{(X_i,Y_i):i\in[n]\}$ are mutually independent, where for some $0\leq\rho\leq1$, we have $\Corr(L^2(X_i),L^2(Y_i))=\rho$ for all $i$.
    Suppose $X$ and $Y$ have the same marginal distributions.
    If $f:\R^n\to\R$ satisfies $\E[f(X)^2]<\infty$, then let $M$ denote the maximum correlation between $L^2(f)$ and any function of Efron--Stein degree at most $D$, i.e.,
    % \[ M = \Corr(L^2(f(X)),\mc G), \]
    $M = \Corr(L^2(f(X)),\mc G),$
    where
    % \[ \mc G = \sum_{\abs{S}\leq D} L^2(X_S). \]
    $\mc G = \sum_{\abs{S}\leq D} L^2(X_S).$
    Let
    \[ \gamma = \sup_{G,H}\sum_{S\neq\emptyset}\norm{G_S}_2\norm{H_S}_2\rho^{\abs{S}}, \]
    where the supremum is taken over all $g,h:\R\to\R$ such that $\E[g(f(X))]=\E[h(f(Y))]=0$ and $\E[g(f(X))^2]=\E[h(f(Y))^2]=1$, and $G=g\circ f \in L^2(X)$ and $H=h\circ f\in L^2(Y)$.
    Then
    \[ M \leq \rho^{-D/2}\sqrt{\gamma}. \]
\end{lemma}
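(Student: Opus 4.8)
The plan is to mirror the proof of \Cref{lemma:low_degree_multilinear}, with the Efron--Stein decomposition and its orthogonality playing the role of the orthonormal system $\set{\chi_S}$, and to use the equal-marginals hypothesis at exactly one point to pass from a two-function quantity back to a single function. First I would reduce the computation of $M$ to a sum of squared Efron--Stein norms. Fix $g_1\in L^2(f(X))$ and $g_2\in\mc G$, normalized so that $\E[g_1]=\E[g_2]=0$ and $\E[g_1^2]=\E[g_2^2]=1$, and write $g_1=g(f(X))$ with $G=g\circ f\in L^2(X)$. Since $g_2$ has Efron--Stein degree at most $D$, its decomposition is supported on $\abs S\le D$, and the orthogonality of Efron--Stein components (which follows from the defining property $\E[G_S\mid X_{S'}]=0$ for $S\not\subseteq S'$, exactly as in the proof of \Cref{lemma:nonseparable}) gives
\[ \E[g_1 g_2] = \sum_{1\le\abs S\le D}\E[G_S (g_2)_S] \le \sum_{1\le\abs S\le D}\norm{G_S}_2\norm{(g_2)_S}_2 \le \sqrt{\sum_{1\le\abs S\le D}\norm{G_S}_2^2}, \]
using Cauchy--Schwarz twice together with $\sum_{S\neq\emptyset}\norm{(g_2)_S}_2^2=1$. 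Taking $g_2\propto\sum_{1\le\abs S\le D}G_S$ (normalized) makes both inequalities tight, so that $M=\sup_g\sqrt{\sum_{1\le\abs S\le D}\norm{G_S}_2^2}$, the supremum over normalized $g$ with $G=g\circ f$.

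Next I would produce a matching lower bound on $\gamma$ for each such $g$. This is where the hypothesis that $X$ and $Y$ share marginals is used: since the pairs $(X_i,Y_i)$ are independent across $i$ and $X_i\overset{d}{=}Y_i$, the full vectors satisfy $X\overset{d}{=}Y$, hence $f(X)\overset{d}{=}f(Y)$ and the Efron--Stein components of $g\circ f$ computed against $Y$ have the same $L^2$ norms as those computed against $X$. Consequently the choice $h=g$ is admissible in the definition of $\gamma$ (it meets the normalization $\E[h(f(Y))]=0$, $\E[h(f(Y))^2]=1$) and yields $\norm{H_S}_2=\norm{G_S}_2$ for every $S$, where $H=g\circ f\in L^2(Y)$. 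Plugging this single diagonal pair $(g,g)$ into the supremum defining $\gamma$ gives
\[ \gamma \ge \sum_{S\neq\emptyset}\norm{G_S}_2^2\,\rho^{\abs S} \ge \rho^D\sum_{1\le\abs S\le D}\norm{G_S}_2^2, \]
where the second step uses $\rho^{\abs S}\ge\rho^D$ for $\abs S\le D$ and $\rho\le1$, discarding the nonnegative $\abs S>D$ terms.

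Combining the two displays, for every normalized $g$ we have $\sum_{1\le\abs S\le D}\norm{G_S}_2^2\le\rho^{-D}\gamma$; taking the supremum over $g$ yields $M^2\le\rho^{-D}\gamma$, which rearranges to the claimed $M\le\rho^{-D/2}\sqrt{\gamma}$ (for $\rho>0$).

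I expect the only genuinely delicate point to be the second paragraph: the leap from a bound on the Efron--Stein norms of a \emph{single} function to the two-function quantity $\gamma$ is not automatic, because $\gamma$ is a supremum over an independently chosen pair $(g,h)$. The equal-marginals hypothesis is precisely what lets me collapse $\gamma$ to the diagonal $h=g$ while preserving $\norm{H_S}_2=\norm{G_S}_2$; without it one could not guarantee that a single function is simultaneously correctly normalized and norm-matched on the $Y$ side. Everything else is a routine transcription of the multilinear argument, with the Fourier weights $\widehat f(S)^2$ replaced by the squared Efron--Stein norms $\norm{G_S}_2^2$.
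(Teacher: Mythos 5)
Your proposal is correct and follows essentially the same route as the paper's proof: reduce $M$ to $\sup_g\bigl(\sum_{1\le\abs{S}\le D}\norm{G_S}_2^2\bigr)^{1/2}$ via Efron--Stein orthogonality and Cauchy--Schwarz, then use the equal-marginals hypothesis to take $h=g$ (so $H_S=G_S$) in the definition of $\gamma$ and bound $\rho^{\abs{S}}\ge\rho^D$ on the low-degree terms. The only cosmetic difference is that you spell out the reduction for $M$ explicitly where the paper defers to its earlier argument for \eqref{eq:epsilon_equivalence_1}, and you use only the lower bound $\gamma\ge\sum_{S\neq\emptyset}\norm{G_S}_2^2\rho^{\abs{S}}$ where the paper notes this is in fact the exact value of the inner supremum.
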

\begin{proof}
    Let $Z=f(X)$ and $W=f(Y)$.
    Using a slightly generalized but essentially similar argument as that which we used to show \eqref{eq:epsilon_equivalence_1}, we have
    \[ M = \sup_{\substack{g\in L^2(f(X))\\\E[g]=0,\E[g^2]=1}}\sqrt{\sum_{1\leq\abs{S}\leq D}\norm{G_S}_2^2}, \]
    where $G=g\circ f\in L^2(X)$.
    It then suffices to show that for any such $g$,
    \[ \sqrt{\sum_{1\leq\abs{S}\leq D}\norm{G_S}_2^2} \leq \rho^{-D/2}\paren{\sup_{\substack{h\in L^2(f(Y))\\\E[h]=0,\E[h^2]=1}}\sum_{S\neq\emptyset}\norm{G_S}_2\norm{H_S}_2\rho^{\abs{S}}}^{1/2}, \]
    where $H=h\circ f \in L^2(Y)$.
    Letting $g$ and $h$ be the same when viewed as functions $\R\to\R$, because $X$ and $Y$ have the same marginal distributions, we note that $H_S=G_S$ for all $S$, which achieves the equality case of the Cauchy--Schwarz inequality that implies
    \[ \sup_{\substack{h\in L^2(f(Y))\\\E[h]=0,\E[h^2]=1}}\sum_{S\neq\emptyset}\norm{G_S}_2\norm{H_S}_2\rho^{\abs{S}} = \sum_{S\neq\emptyset}\norm{G_S}_2^2\rho^{\abs{S}}. \]
    Hence, it suffices to show
    \[ \sum_{1\leq\abs{S}\leq D}\norm{G_S}_2^2 \rho^{D} \leq \sum_{S\neq\emptyset}\norm{G_S}_2^2\rho^{\abs{S}}, \]
    which is clearly true as $0 \leq \rho \leq 1$.
\end{proof}
To justify \eqref{eq:low_degree_general}, note that for any $0 \leq \rho < 1$, by \Cref{remark:correlation_construction_L2}, we can construct $Y$ satisfying the conditions of \Cref{lemma:low_degree_general}.
Recall that \Cref{theorem:nonseparable} was proven by inductively applying \Cref{lemma:nonseparable}.
Using the fact that the proof of \Cref{lemma:nonseparable} implies the stronger technical statement
\[ \sup_{G,H} \sum_{S\neq\emptyset}\norm{G_S}_2\norm{H_S}_2\rho^{\abs{S}} \leq (1-\varepsilon)\rho+\varepsilon\rho^2, \]
the proof of \Cref{theorem:nonseparable} implies that for a hierarchical function $f\in L^2(X)$ of non-separability $\varepsilon>0$ and depth $d$, and for any $\delta<\varepsilon$ and $0 \leq \rho < 1$ (where because our constructed $Y$ has the same marginal distribution as $X$, we have $f$ is also such a hierarchical function in $L^2(Y)$),
\[ \sup_{G,H} \sum_{S\neq\emptyset}\norm{G_S}_2\norm{H_S}_2\rho^{\abs{S}} \leq (1-\delta)^{d-C_{\varepsilon,\delta}\log\paren{\frac{1}{1-\rho}}}. \]
Thus, \Cref{lemma:low_degree_general} implies \eqref{eq:low_degree_general}.

Lastly, we use \Cref{theorem:nonseparable} to prove \Cref{theorem:sq_learning_Gaussian}.
\begin{proof}[Proof of \Cref{theorem:sq_learning_Gaussian}]
    By \cite[Theorem 1.7]{diakonikolas2021optimality}, it suffices to show that for any polynomial $p:\R^m\to\R$ of degree less than $D=n^c$, we have 
    \[ \E[\abs{f(X)-p(X)}]\geq\frac{1}{8}-o(1). \]
    To show this we will use \cite[Theorem 1.5]{diakonikolas2021optimality}.
    By \Cref{theorem:nonseparable},
    \[ \Corr(L^2(f(X)),L^2(f(Y))) \leq (1-\delta)^{d-\Theta(\log\frac{1}{1-\rho})}. \]
    Fixing $\delta<\varepsilon$ and setting $1-\rho=\frac{1}{D^3}$, this bound is $e^{-\Theta(\log n)+\Theta(\log D)}$, and choosing $c$ sufficiently small ensures this is $o(1)$.
    For any such polynomial $p$, by \cite[Theorem 1.5]{diakonikolas2021optimality}, we have
    \[ \E[\abs{f(X)-p(X)}]\geq\frac{\P(f(X)\neq f(Y))}{4}-O(D\sqrt{1-\rho}) = \frac{\frac{1}{2}-\frac{1}{2}\E[f(X)f(Y)]}{4} - o(1), \]
    where $Y$ is a $\rho$-correlated spherical Gaussian.
    By \eqref{eq:correlation_using_Markov}, $\Corr(L^2(X_i),L^2(Y_i))=\rho$ for all $i$, so
    \begin{align*}
        \E[\abs{f(X)-p(X)}]
        &\geq \frac{1}{8}-\frac{\E[f(X)f(Y)]}{8} - o(1)
        \\ &\geq \frac{1}{8}-\frac{\Var(f(X))\Corr(L^2(f(X)),L^2(f(Y)))}{8}-o(1)
        \\ &\geq \frac{1}{8}-\frac{(1-\delta)^{d-\Theta(\log\frac{1}{1-\rho})}}{8}-o(1)
        \\ &= \frac{1}{8}-o(1),
    \end{align*}
    which completes the proof.
\end{proof}

\section*{Acknowledgements}
E.M.\ was partially supported by NSF DMS-2031883, Bush Faculty Fellowship ONR-N00014-20-1-2826, and Simons Investigator award (622132).

\bibliographystyle{amsinit}
\bibliography{ref,all,my}

\providecommand{\bysame}{\leavevmode\hbox to3em{\hrulefill}\thinspace}
\providecommand{\MR}{\relax\ifhmode\unskip\space\fi MR }
% \MRhref is called by the amsart/book/proc definition of \MR.
\providecommand{\MRhref}[2]{%
  \href{http://www.ams.org/mathscinet-getitem?mr=#1}{#2}
}
\providecommand{\href}[2]{#2}
\begin{thebibliography}{10}

\bibitem{ABCKLRZ:22}
E. Abbe, S. Bengio, E. Cornacchia, J. Kleinberg, A. Lotfi, M. Raghu, and C. Zhang, \emph{Learning to reason with neural networks: Generalization, unseen data and {B}oolean measures}, Advances in Neural Information Processing Systems \textbf{35} (2022), 2709--2722.

\bibitem{AbbeSandon:20}
E. Abbe and C. Sandon, \emph{On the universality of deep learning}, Advances in Neural Information Processing Systems \textbf{33} (2020), 20061--20072.

\bibitem{bauerschmidt2019renormalization}
R. Bauerschmidt, D.~C. Brydges, and G. Slade, \emph{Introduction to a renormalisation group method}, Lecture Notes in Mathematics, vol. 2242, Springer, Singapore, 2019.

\bibitem{BenorLinial:90}
M. Ben-Or and N. Linial, \emph{Collective coin flipping}, Randomness and Computation (S. Micali, ed.), Academic Press, New York, 1990.

\bibitem{BeKaSc:99}
I. Benjamini, G. Kalai, and O. Schramm, \emph{Noise sensitivity of {B}oolean functions and applications to percolation}, Inst. Hautes \'{E}tudes Sci. Publ. Math. \textbf{90} (1999), 5--43.

\bibitem{BrunaMallat:13}
J. Bruna and S. Mallat, \emph{Invariant scattering convolution networks}, IEEE Transactions on Pattern Analysis and Machine Intelligence \textbf{35} (2013), no.~8, 1872--1886.

\bibitem{CramaHammer}
Y. Crama and P.~L. Hammer (eds.), \emph{Boolean models and methods in mathematics, computer science, and engineering}, Encyclopedia of Mathematics and its Applications, vol. 134, Cambridge University Press, Cambridge, 2010.

\bibitem{dachman2014approximate}
D. Dachman-Soled, V. Feldman, L.-Y. Tan, A. Wan, and K. Wimmer, \emph{Approximate resilience, monotonicity, and the complexity of agnostic learning}, Proceedings of the Twenty-sixth Annual ACM-SIAM Symposium on Discrete Algorithms, SIAM, 2014, pp.~498--511.

\bibitem{diakonikolas2021optimality}
I. Diakonikolas, D.~M. Kane, T. Pittas, and N. Zarifis, \emph{The optimality of polynomial regression for agnostic learning under {G}aussian marginals in the {SQ} model}, Proceedings of the Thirty-fourth Conference on Learning Theory, PMLR 134, 2021.

\bibitem{EfronStein:81}
B. Efron and C. Stein, \emph{The jacknife estimate of variance}, Ann. Statist. \textbf{9} (1996), no.~3, 586--596.

\bibitem{garban2010fourier}
C. Garban, G. Pete, and O. Schramm, \emph{The {F}ourier spectrum of critical percolation}, Acta Math. \textbf{205} (2010), no.~1, 19--104.

\bibitem{Gebelein:41}
H. Gebelein, \emph{Das statistische {P}roblem der {K}orrelation als {V}ariations- und {E}igenwertproblem und sein {Z}usammenhang mit der {A}usgleichsrechnung}, Z. Angew. Math. Mech. \textbf{21} (1941), 364--379.

\bibitem{Grimmett}
G. Grimmett, \emph{Percolation}, second ed., Grundlehren der Mathematischen Wissenschaften [Fundamental Principles of Mathematical Sciences], vol. 321, Springer-Verlag, Berlin, 1999.

\bibitem{HRRT:17}
J. H{\aa}stad, B. Rossman, R.~A. Servedio, and L.-Y. Tan, \emph{An average-case depth hierarchy theorem for {B}oolean circuits}, Journal of the ACM (JACM) \textbf{64} (2017), no.~5, 35.

\bibitem{hirschfeld1935connection}
H.~O. Hirschfeld, \emph{A connection between correlation and contingency}, Mathematical Proceedings of the Cambridge Philosophical Society, vol.~31, Cambridge University Press, 1935, pp.~520--524.

\bibitem{HuangMossel:24}
H. Huang and E. Mossel, \emph{Low degree hardness for broadcasting on trees}, The Thirty-eighth Annual Conference on Neural Information Processing Systems, 2024.

\bibitem{jonasson2023noise}
J. Jonasson, J.~E. Steif, and O. Zetterqvist, \emph{Noise sensitivity and stability of deep neural networks for binary classification}, Stochastic Process. Appl. \textbf{165} (2023), 130--167.

\bibitem{KoehlerMossel:22}
F. Koehler and E. Mossel, \emph{Reconstruction on trees and low-degree polynomials}, Advances in Neural Information Processing Systems \textbf{35} (2022), 18942--18954.

\bibitem{MhLiPo:16}
H. Mhaskar, Q. Liao, and T. Poggio, \emph{Learning functions: {W}hen is deep better than shallow}, arXiv preprint arXiv:1603.00988 (2016).

\bibitem{MosselODonnell:02}
E. Mossel and R. O'Donnell, \emph{On the noise sensitivity of monotone functions}, Mathematics and computer science, II (Versailles, 2002) (Basel), Trends Math., Birkh\"auser, 2002, pp.~481--495.

\bibitem{Mossel_Gaussian}
E. Mossel, \emph{Gaussian bounds for noise correlation of functions}, Geom. Funct. Anal. \textbf{19} (2010), no.~6, 1713--1756.

\bibitem{Mossel:19deep}
\bysame, \emph{Deep learning and hierarchal generative models}, arXiv preprint arXiv:1612.09057, 2019.

\bibitem{ODonnell:14}
R. O'Donnell, \emph{Analysis of {B}oolean functions}, Cambridge University Press, 2014.

\bibitem{PaNgBa:2015}
A.~B. Patel, T. Nguyen, and R.~G. Baraniuk, \emph{A probabilistic theory of deep learning}, arXiv preprint arXiv:1504.00641 (2015).

\bibitem{Renyi:59}
A. R{\'e}nyi, \emph{On measures of dependence}, Acta Math. Acad. Sci. Hungar. \textbf{10} (1959), 441--451.

\bibitem{schramm2000sle}
O. Schramm, \emph{Scaling limits of loop-erased random walks and uniform spanning trees}, Israel J. Math. \textbf{118} (2000), 221--288.

\bibitem{smirnov2001critical}
S. Smirnov, \emph{Critical percolation in the plane: conformal invariance, {C}ardy's formula, scaling limits}, C. R. Acad. Sci. Paris S\'er. I Math. \textbf{333} (2001), no.~3, 239--244.

\bibitem{werner2001critical}
S. Smirnov and W. Werner, \emph{Critical exponents for two-dimensional percolation}, Math. Res. Lett. \textbf{8} (2001), no.~5-6, 729--744.

\bibitem{stauffer1992introduction}
D. Stauffer and A. Aharony, \emph{Introduction to percolation theory}, second ed., Taylor \& Francis, London, 1992.

\bibitem{Telgarsky:16}
M. Telgarsky, \emph{Benefits of depth in neural networks}, Conference on Learning Theory, 2016, arXiv preprint arXiv:1602.04485, pp.~1517--1539.

\end{thebibliography}

\end{document}